\newcommand{\veps}{\varepsilon}
\newcommand{\R}{\mathbb{R}}
\newtheorem{remark}{Remark}[section]
\newtheorem{theorem}{Theorem}[section]
\newtheorem{corollary}{Corollary}[section]
\begin{document}

\title{Quasi-Einstein manifolds with structure  of warped product}

\author{
\textbf{Paula Gon\c calves Correia Bonfim}
\\
{\small\it e-mail:  paulacorreiacatu@hotmail.com}
  \\
\textbf{Romildo Pina}
\\
{\small\it IME, Universidade Federal de Goi\'as,}\\
{\small\it Caixa Postal 131, 74001-970, Goi\^ania, GO, Brazil }\\
{\small\it e-mail: romildo@ufg.br }
}

\maketitle

\thispagestyle{empty}

\markboth{abstract}{abstract}
\addcontentsline{toc}{chapter}{abstract}

\begin{abstract}
\noindent

In this paper, we prove that under certain conditions, in a quasi-Einstein semi-Riemannian warped product the fiber is necessarily a Einstein manifold. We provide all the quasi-Einstein manifolds when r-Bakry-Emery tensor is null, the base is conformal to an n-dimensional pseudo-Euclidean space, invariant under the action of an $(n-1)$-dimensional translation group and the fiber is Ricci-flat. As an application, we have built a family of Ricci-flat Einstein warped product whose base is not locally conformally flat.
\end{abstract}

\noindent 2010 Mathematics Subject Classification: 53C21, 53C50, 53C25 \\
Key words: semi-Riemannian metric, quasi-Einstein manifold, Einstein manifold, warped product.

\section{Introduction}

In the early 1980s, in order to solve the Poincaré conjecture, Richard Hamilton proposed the initial value problem
$$\begin{array}{c}
\displaystyle \frac{\partial}{\partial t}g(t)=-2Ric_{g(t)},\\
g(0)=g_0,
\end{array}$$
which represents an evolution for a family of Riemannian metrics in a given differentiable manifold, where $Ric_{g(t)}$ is the Ricci tensor in the metric $g(t)$, $g_0$ is a given initial metric and $t$ is the deformation time. This initial value problem became known as the Ricci Flow (see \cite{Chow-Knopf}). In his studies Hamilton realized that the Einstein manifolds were fixed points of the Ricci Flow, and then these manifolds became the subject of several researches.

In attempting to discover some properties of the Einstein manifolds, disturbances of the definition itself have arisen, for example the concept of quasi-Einstein manifolds. A Riemannian manifold $(M^n,g)$ $n$-dimensional is said quasi-Einstein to satisfy
\begin{equation} \label{eqoriginalquasiEinstein}
Ric_g + Hess_gf - \frac{1}{r}df\otimes df = \rho g,
\end{equation}
with $r\in (0,\infty]$, $\rho\in\mathbb{R}$ and $f\in C^\infty(M)$ (see \cite{Case-Shu-Wei}). The tensor on the left side of equality is the $r$-Bakry-Emery tensor. Note that if $r=\infty$, $(M^n,g,f)$ is a gradient Ricci soliton with potential function $f$. If $f$ is constant, $(M,g)$ is a Einstein manifold. Now, if $r$ is finite, we can consider the function $h=e^{-\frac{f}{r}}$, and the equation ($\ref{eqoriginalquasiEinstein}$) becomes
\begin{equation} \label{eqquasiEinstein}
Ric_g - \frac{r}{h}Hess_g h = \rho g.
\end{equation}
The definition of the quasi-Einstein manifolds is naturally extended to the semi-Riemannian case (see \cite{BV-GR-GF}). In our work we will consider semi-Riemannian manifolds satisfying (\ref{eqquasiEinstein}) with $r\in\mathbb{N}$.

In \cite{Kim-Kim}, the authors proved that a Riemannian warped product $M=B\times_hF$ is Einstein with $Ric_g=\rho g$ if and only if
\begin{equation} \label{eqsdoKimKim}
\begin{array}{c}
\displaystyle Ric_{g_B} = \rho g_B + \frac{r}{h}Hess_{g_B}h,\\
Ric_{g_F}=\mu g_F,\\
h\Delta_{g_B}h + (r-1)|grad_{g_B}h|^2 + \rho h^2 = \mu,
\end{array}
\end{equation}
where $g_B$ and $g_F$ represent the metrics of the base and fiber, respectively. We observe that the first equation of (\ref{eqsdoKimKim}) tells us that $B$ is quasi-Einstein and the second equation shows that $F$ is Einstein with Ricci constant curvature $\mu$ satisfying the third. In the same work, these authors have shown that if a manifold $B$ satisfies the first equation of (\ref{eqsdoKimKim}), then the torsion function $h$ satisfies the third equation for some $\mu \in\mathbb{R}$. Thus, choosing a manifold $F$ of dimension $r$ and with Ricci curvature $\mu$, we can construct a Einstein manifold $B \times_hF $.

In \cite{He-Petersen-Wylie}, the authors classified the Einstein Riemannian warped products when the base is locally conformally flat. Already, in \cite{Marcio-Romildo}, the authors studied in  Einstein semi-Riemannian warped product when the base is conformal to an $n$-dimensional pseudo-Euclidean space.

In \cite{BV-GR-VL}, the authors had already shown necessary and sufficient conditions for a Riemannian product warped $M = B \times_fF$ to be locally conformed flat, and these conditions impose strong restrictions on the base and the fiber.

In \cite{Catino}, the authors have proved that any complete locally conformally flat quasi-Einstein manifold of dimension $n\geq 3$ is locally a warped product with $(n−1)$–dimensional fibers of constant curvature.

In this paper we will generalize the work of Sousa and Pina (see \cite{Marcio-Romildo}) presenting a family of Einstein semi-Riemannian warped product whose base is not locally conformally flat. We will study quasi-Einstein manifolds in order to construct new examples of Einstein manifolds using the result obtained in \cite{Kim-Kim}, which are also valid in the semi-Riemannian case. Due to the conditions in \cite{BV-GR-VL}, we will use warped product metrics in an attempt to obtain quasi-Einstein manifolds that are not locally conformally flat. Thus, in the end we get examples of Einstein semi-Riemannian manifolds with warped product metric, where the base is a quasi-Einstein manifold (warped product) that is not locally conformally flat.

Initially, we proved that if in a non trivial warped product $(B\times_fF,\widetilde{g})$ exists a function $h$ satisfying $Hess_{\widetilde{g}}h(X,Y)=0, \ \forall X\in\mathcal{L}(B),Y\in\mathcal{L}(F)$, and if there is at least one pair of vector $(X_i,X_k)$ of the base such that $Hess_{g_B}f(X_i,X_k)\neq 0$, than the function $h$ only depends on the base (see Theorem \ref{teor1}). Consequently, in quasi-Einstein warped products satisfying this condition on the torsion function, the fiber is necessarily a Einstein manifold (see Corollary \ref{cor1}). Next, we consider quasi-Einstein warped products with the base conformal to a $n$-dimensional pseudo-Euclidean space and Einstein fibers, being the functions involved invariants under the action of an $(n−1)$-dimensional translation group. More precisely, let $(\mathbb{R}^n,g)$ be the pseudo-Euclidean space, $n\geq 3$, with coordinates $(x_1,\ldots,x_n)$ and $g_{ij}=\delta_{ij}\varepsilon_i$, and let $M=(\mathbb{R}^n,\overline{g})\times_fF^m$ be a warped product, where $\displaystyle \overline{g}=\frac{1}{\varphi^2}g$, $F$ is a Einstein semi-Riemannian manifold with constant Ricci curvature $\lambda_F$, $m\geq 1$, $f,\varphi,h: \mathbb{R}^n \rightarrow R$ smooth functions, and $f$ positive. In the Theorem \ref{teor2} we find necessary and sufficient conditions for the warped product metric $\widetilde{g}=\overline{g}\oplus f^2g_F$ to be quasi-Einstein; this is, so that it $\widetilde{g}$ satisfies the equation ($\ref{eqquasiEinstein}$). In the Theorem \ref{teor3} we consider $\xi = \sum\limits_{i=1}^{n}\alpha_ix_i$, $\alpha_i\in\mathbb{R}$ as a basic invariant for
an $(n-1)$-dimensional translation group, and we try to get smooth functions $f(\xi)$, $\varphi(\xi)$ and $h(\xi)$ such that $\widetilde{g}$ is quasi-Einstein with $\rho = 0$ and $F$ Ricci-flat. We first obtain necessary and sufficient conditions on $f(\xi)$, $\varphi(\xi)$ and $h(\xi)$ for the existence of $\widetilde{g}$. We show
that these conditions are different depending on the direction $\displaystyle \alpha = \sum\limits_{i=1}^{n}\alpha_i\frac{\partial}{\partial x_i}$ being null or not. In the case where $\alpha$ is not null, some solutions are explicitly given in the Theorems \ref{teor4} and \ref{teor5}, and the other are implicitly given in Theorem \ref{teortudoimplicito}. In this way, we present all solutions that are invariant under an $(n-1)$-dimensional translation group (see Theorem \ref{teorseesomentese}). Already in the case where $\alpha$ is null there are infinitely many solutions (see Theorem \ref{teor6}). We illustrate this fact with some explicit examples. Finally, in the Corollaries \ref{cor2} and \ref{cor3} we present a family of Einstein semi-Riemannian warped products whose base is not locally conformally flat. These Einstein manifolds can also be seen as multiply warped products with two fibers, and are solutions in the vacuum case ($T=0$) of the following equation
$$Ric_{\widetilde{g}} − \frac{1}{2}K\widetilde{g} = T,$$
where $K$ is the scalar curvature of $\widetilde{g}$ and $T$ is a symmetric tensor of order $2$.

\section{Main Statements}

In what follows, we state our main results. We denote by $f,_{x_ix_j}$, $\varphi,_{x_ix_j}$ and $h,_{x_ix_j}$ the second order derivative of $f$, $\varphi$ and $h$ respectively, with respect to $x_i$ and $x_j$.

\begin{theorem} \label{teor1}
Let $M=B^n\times_fF^m$ be a non trivial semi-Riemannian warped product with metric $\widetilde{g}=g_B\oplus f^2g_F$. If $h:M\rightarrow \mathbb{R}$ is a function satisfying
\begin{equation} \label{condHessh}
Hess_{\widetilde{g}}h(X,Y)=0, \ \ \ \ \forall X\in\mathcal{L}(B),Y\in\mathcal{L}(F),
\end{equation}
and if there is at least one pair of vector $(X_i,X_k)$ of the base such that $Hess_{g_B}f(X_i,X_k)\neq 0$, than $h$ depends only on the base.
\end{theorem}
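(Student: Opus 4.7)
The approach is to decode the mixed-Hessian condition via the O'Neill warped-product connection, integrate the resulting PDE to get a normal form for $h$, and then invoke the hypothesis on $Hess_{g_B}f$ to force the fiber dependence of $h$ to be trivial.

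First, by the O'Neill formula for warped products, $\nabla_X V = \frac{X(f)}{f}V$ for every $X\in\mathcal{L}(B)$ and $V\in\mathcal{L}(F)$, so
\begin{equation*}
Hess_{\widetilde{g}}\,h(X, V) \;=\; X(V(h)) \;-\; \frac{X(f)}{f}\,V(h).
\end{equation*}
The hypothesis (\ref{condHessh}) is therefore equivalent to $X(V(h)/f) = 0$ for every horizontal lift $X$. In local product coordinates $(x^i, y^\alpha)$ this reads $\partial_{x_i}\bigl(\partial_{y_\alpha}h/f\bigr) = 0$, so $\partial_{y_\alpha}h(x,y) = f(x)\phi_\alpha(y)$ for some functions $\phi_\alpha$ on $F$. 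Symmetry of second partials forces $\phi_\alpha = \partial_{y_\alpha}B$ for a single scalar $B:F\to\mathbb{R}$, and integrating in the fiber variables yields the normal form
\begin{equation*}
h(x,y) \;=\; f(x)\,B(y) + D(x)
\end{equation*}
for some function $D$ on the base.

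Second, I argue that $B$ must be constant, and this is where the assumption $Hess_{g_B}f(X_i,X_k)\neq 0$ is essential. For any two horizontal lifts $X_i, X_k$, the covariant derivative $\nabla_{X_i}X_k$ is again horizontal and coincides with the lift of $\nabla^{g_B}_{X_i}X_k$; substituting the normal form above gives
\begin{equation*}
Hess_{\widetilde{g}}\,h(X_i, X_k) \;=\; B(y)\,Hess_{g_B}f(X_i, X_k) \;+\; Hess_{g_B}D(X_i, X_k).
\end{equation*}
Taking the vertical covariant derivative along any $V\in\mathcal{L}(F)$ and using that $\nabla_V X_i = (X_i(f)/f)\,V$ is vertical — so that the two correction terms in $\nabla_V(Hess_{\widetilde{g}}h)(X_i,X_k)$ are killed by the mixed-Hessian hypothesis — one arrives at $\nabla_V(Hess_{\widetilde{g}}h)(X_i,X_k) = V(B)\,Hess_{g_B}f(X_i, X_k)$. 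A compatibility check via the Ricci identity for $\nabla^3 h$, combined with O'Neill's curvature formulas for the warped product on vertical--horizontal pairs, forces this quantity to vanish. Since $Hess_{g_B}f(X_i, X_k)\neq 0$ at the prescribed pair, one concludes $V(B)\equiv 0$ for every $V\in\mathcal{L}(F)$; hence $B$ is constant and $h(x,y)=cf(x)+D(x)$ depends only on the base.

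The main obstacle is this second step: the mixed-Hessian PDE on its own admits arbitrary $B(y)$, so one has to extract an additional constraint from the warped-product geometry. The non-vanishing of $Hess_{g_B}f$ at a single pair is precisely the input that allows one to divide through and eliminate the fiber factor. The first step, by contrast, is a routine computation using the O'Neill connection formula.
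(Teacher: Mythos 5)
Your first step is correct and coincides with the paper's: the O'Neill formula $\nabla_XV=\frac{X(f)}{f}V$ turns condition (\ref{condHessh}) into $\partial_{x_i}\bigl(\partial_{y_\alpha}h/f\bigr)=0$, and integration gives the normal form $h(x,y)=f(x)B(y)+D(x)$ (the paper writes this as $h_{,y_j}=f\,e^{l(y)}$ and integrates in $y_j$).

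Your second step, however, has a genuine gap, and the tool you propose cannot close it. The Ricci identity for $\nabla^3h$ on the triple $(V,X_i,X_k)$ reads
$$(\nabla_V\nabla^2h)(X_i,X_k)-(\nabla_{X_i}\nabla^2h)(V,X_k)=\langle R(V,X_i)\nabla h,X_k\rangle ;$$
the middle term vanishes by the mixed-Hessian hypothesis (as you observe), the first term equals $V(B)\,Hess_{g_B}f(X_i,X_k)$, but O'Neill's curvature formula $R(V,X_i)X_k=\frac{Hess_{g_B}f(X_i,X_k)}{f}V$ makes the right-hand side equal to $\frac{Hess_{g_B}f(X_i,X_k)}{f}\,V(h)=V(B)\,Hess_{g_B}f(X_i,X_k)$ as well. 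The identity is a tautology and forces nothing. This is unavoidable: for \emph{any} $B$ on the fiber and $D$ on the base, $h=fB(y)+D(x)$ genuinely satisfies $Hess_{\widetilde g}h(X,V)=0$ (take $B^2=\mathbb{R}^2$ flat, $F=\mathbb{R}$, $f=e^{x_1x_2}$, $h=f\cdot y$: the mixed Hessian vanishes, $Hess_{g_B}f(\partial_1,\partial_2)\neq 0$, yet $h$ depends on the fiber), so no local differential consequence of condition (\ref{condHessh}) alone can rule out nonconstant $B$. The missing ingredient --- which the paper invokes at exactly this point, even though it is not listed among the hypotheses of Theorem \ref{teor1} but is present in the intended application, Corollary \ref{cor1} --- is the quasi-Einstein equation. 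Restricted to horizontal pairs it gives
$$\frac{r}{h}\,Hess_{\widetilde g}h(X_i,X_k)=Ric_{g_B}(X_i,X_k)-\frac{m}{f}Hess_{g_B}f(X_i,X_k)-\rho\, g_B(X_i,X_k),$$
a quantity depending only on the base; combined with your normal form, differentiating in a fiber direction isolates $Hess_{g_B}f(X_i,X_k)$ times a nowhere-vanishing fiber factor, and the hypothesis $Hess_{g_B}f(X_i,X_k)\neq 0$ then yields the contradiction $e^{l(y)}=0$. You must import that equation (or some equivalent hypothesis controlling the fiber dependence of the horizontal Hessian of $h$) to complete the argument.
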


As an application of Theorem \ref{teor1} we prove that under some condition on the torsion function, in a non trivial warped product quasi-Einstein, the fiber is Einstein.

\begin{corollary} \label{cor1}
Let $M=B^n\times_fF^m$ be a non trivial semi-Riemannian warped product with metric $\widetilde{g}=g_B\oplus f^2g_F$ and torsion function satisfying $Hess_{g_B}f(X_i,X_k)\neq 0$ for some pair of vectors of the base $(X_i,X_k)$. Consider $h:M\rightarrow \mathbb{R}$ a function such that $(M,\widetilde{g},h,\rho)$ is a quasi-Einstein manifold, with $\rho\in\mathbb{R}$. Then $h$ depends only on the base and consequently the fiber $F$ is an Einstein manifold.
\end{corollary}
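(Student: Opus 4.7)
The plan is a direct reduction to Theorem \ref{teor1}, followed by reading off the Einstein property of $F$ from the fiber-fiber component of the quasi-Einstein equation.

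First, I would apply the quasi-Einstein equation (\ref{eqquasiEinstein}) to a pair $X \in \mathcal{L}(B)$ and $Y \in \mathcal{L}(F)$. Since $\widetilde{g} = g_B \oplus f^2 g_F$ is block diagonal, $\widetilde{g}(X,Y) = 0$, and the standard warped product Ricci identities give $Ric_{\widetilde{g}}(X,Y) = 0$. The quasi-Einstein equation therefore collapses to $Hess_{\widetilde{g}} h(X,Y) = 0$, which is exactly the hypothesis (\ref{condHessh}) of Theorem \ref{teor1}. Combined with the standing assumption that $Hess_{g_B} f(X_i,X_k) \neq 0$ for some pair of base vectors, Theorem \ref{teor1} immediately yields the first conclusion: $h$ depends only on the base.

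Next I would specialize the quasi-Einstein equation to two vector fields $V, W \in \mathcal{L}(F)$. The O'Neill-type formulas for a warped product give
$$Ric_{\widetilde{g}}(V,W) = Ric_{g_F}(V,W) - g_F(V,W)\bigl[f\,\Delta_{g_B} f + (m-1)\,g_B(grad_{g_B} f, grad_{g_B} f)\bigr].$$
Because $h$ depends only on the base, a short computation using $\nabla^{\widetilde{g}}_V W = \nabla^F_V W - f g_F(V,W)\,grad_{g_B} f$ yields $Hess_{\widetilde{g}} h(V,W) = f\, g_F(V,W)\,g_B(grad_{g_B} f, grad_{g_B} h)$. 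Substituting these and $\widetilde{g}(V,W) = f^2 g_F(V,W)$ into (\ref{eqquasiEinstein}) produces
$$Ric_{g_F}(V,W) = \Psi(b)\, g_F(V,W),$$
where $\Psi$ is a function depending only on $b\in B$. Fixing any pair $(V,W)$ with $g_F(V,W)\neq 0$ and letting $b$ vary forces $\Psi$ to be a real constant $\mu$; therefore $Ric_{g_F} = \mu\, g_F$, so $F$ is Einstein.

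The only step that requires any care is the first, namely verifying that the hypothesis of Theorem \ref{teor1} is extracted correctly from the mixed components of the quasi-Einstein equation. Everything afterwards is bookkeeping with standard warped product identities for $Ric_{\widetilde{g}}$ and $Hess_{\widetilde{g}} h$.
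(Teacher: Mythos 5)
Your proposal is correct and follows essentially the same route as the paper: the mixed components $Ric_{\widetilde{g}}(X,Y)=0=\widetilde{g}(X,Y)$ reduce the quasi-Einstein equation to the hypothesis (\ref{condHessh}) of Theorem \ref{teor1}, giving that $h$ depends only on the base, after which the fiber--fiber components together with $Hess_{\widetilde{g}}h(V,W)=f\,g_B(grad_{g_B}f,grad_{g_B}h)\,g_F(V,W)$ show $Ric_{g_F}$ is a multiple of $g_F$. Your explicit separation-of-variables remark forcing the factor $\Psi$ to be a constant is a small point the paper leaves implicit, but otherwise the arguments coincide.
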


The results obtained in Theorem \ref{teor1} and Corollary \ref{cor1} are valid in the Riemannian case. Motivated by the previous results we study the case of warped products quasi-Einstein with $h$ depending only on the base and the fiber an Einstein manifold. In addition, in accordance with the objectives of our work, we will consider $r\in\mathbb{N}$.

\begin{theorem} \label{teor2}
Let $(\R^n,g)$ be a pseudo-Euclidean space, $n\geq 3$ with coordinates $x=(x_1,\cdots, x_n)$ and $g_{ij}=\delta_{ij}\veps_i$. Consider a warped product $M = (\mathbb{R}^{n}, \overline{g})\times _{f}F^{m}$ with metric $\widetilde{g} = \overline{g} + f^{2}g_{F}$, where $\displaystyle \overline{g} =
\frac{1}{\varphi^{2}}g$,  $F$ is a Einstein semi-Riemannian manifold with constant Ricci curvature $\lambda_{F}$, $m\geq 1$, $f,\varphi, h:\mathbb{R}^{n}\rightarrow \mathbb{R}$, are smooth functions and $f$ is positive. Then  $M$ is a quasi-Einstein manifold with
$$ Ric_{\widetilde{g}} - \frac{r}{h}Hess_{\widetilde{g}}h = \rho \widetilde{g}, \ \ \ \ \ \rho\in\mathbb{R},$$
if and only if the functions $f$, $\varphi$ and $h$ satisfy:

\begin{equation} \label{eqphij}
\begin{array}{l}
(n-2)fh\varphi,_{x_ix_j}-rf\varphi h,_{x_ix_j}-mh\varphi f,_{x_ix_j}-mh\varphi,_{x_i}f,_{x_j}-mh\varphi,_{x_j}f,_{x_i}-rf\varphi,_{x_i}h,_{x_j}\\
-rf\varphi,_{x_j}h,_{x_i}=0, \ \ \ \forall i,j=1,\ldots,n, \ i\neq j,
\end{array}
\end{equation}
\begin{equation} \label{eqphii}
\begin{array}{l}
\varphi[(n-2)fh\varphi,_{x_ix_i}-rf\varphi h,_{x_ix_i}-mh\varphi f,_{x_ix_i}-2mh\varphi,_{x_i}f,_{x_i}-2rf\varphi,_{x_i}h,_{x_i}]\\
\displaystyle +\veps_i\sum\limits_{k=1}^{n}\veps_k[fh\varphi\varphi,_{x_kx_k}-(n-1)fh(\varphi,_{x_k})^2+mh\varphi\varphi,_{x_k}f,_{x_k}+rf\varphi\varphi,_{x_k}h,_{x_k}]=\veps_i\rho fh,\\
\forall i=1,\ldots,n,
\end{array}
\end{equation}
\begin{equation} \label{eqphll}
\begin{array}{l}
\displaystyle \sum\limits_{k=1}^{n}\veps_k[-fh\varphi^2f,_{x_kx_k}+(n-2)fh\varphi\varphi,_{x_k}f,_{x_k}-(m-1)h\varphi^2(f,_{x_k})^2-rf\varphi^2f,_{x_k}h,_{x_k}]\\
=h[\rho f^2-\lambda_F].
\end{array}
\end{equation}
\end{theorem}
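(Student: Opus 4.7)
The strategy is to substitute the O'Neill warped-product formulas for $Ric_{\widetilde{g}}$ and $Hess_{\widetilde{g}}h$ into the quasi-Einstein equation and decompose the resulting tensor identity according to the product structure. Since $f,\varphi,h$ are functions of the base coordinates only, the horizontal-vertical components vanish automatically: $Ric_{\widetilde{g}}(X,V)=0$ from the warped-product structure, and $Hess_{\widetilde{g}}h(X,V)=0$ because the only nonzero Christoffel symbols $\widetilde{\Gamma}^{\,\cdot}_{i\alpha}$ are of type $\widetilde{\Gamma}^{\beta}_{i\alpha}=\delta^{\beta}_{\alpha}f,_{x_i}/f$, contracted against $h,_{x_\beta}=0$. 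Hence only three independent scalar equations remain to check: two on the base (corresponding to $i\neq j$ and $i=j$) and one along the fiber.

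On the horizontal side, I use
\begin{equation*}
Ric_{\widetilde{g}}(X_i,X_j)=Ric_{\overline{g}}(X_i,X_j)-\tfrac{m}{f}Hess_{\overline{g}}f(X_i,X_j),\qquad Hess_{\widetilde{g}}h(X_i,X_j)=Hess_{\overline{g}}h(X_i,X_j),
\end{equation*}
and then translate to the flat background $g=\varphi^{2}\overline{g}$ via the conformal-change formulas. With $u=-\log\varphi$ and $Ric_g=0$ one obtains
\begin{equation*}
Ric_{\overline{g}}(X_i,X_j)=\tfrac{n-2}{\varphi}\varphi,_{x_ix_j}+\tfrac{g_{ij}}{\varphi}\sum_k\veps_k\varphi,_{x_kx_k}-\tfrac{(n-1)g_{ij}}{\varphi^{2}}\sum_k\veps_k(\varphi,_{x_k})^2,
\end{equation*}
and, for any smooth $\psi$,
\begin{equation*}
Hess_{\overline{g}}\psi(X_i,X_j)=\psi,_{x_ix_j}+\tfrac{\varphi,_{x_i}\psi,_{x_j}+\varphi,_{x_j}\psi,_{x_i}}{\varphi}-\tfrac{g_{ij}}{\varphi}\sum_k\veps_k\varphi,_{x_k}\psi,_{x_k}.
\end{equation*}
Inserting these with $\psi=f,h$ into $Ric_{\widetilde{g}}-\tfrac{r}{h}Hess_{\widetilde{g}}h=\rho\widetilde{g}$, the off-diagonal case $i\neq j$ (in which $g_{ij}=0$ and $\widetilde{g}(X_i,X_j)=0$, so all trace corrections and the right-hand side drop out) becomes, after multiplication by $fh\varphi$, exactly (\ref{eqphij}); the diagonal case $i=j$ yields (\ref{eqphii}) after retaining the $\veps_i$-weighted trace terms.

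For the vertical part I use O'Neill's formula $Ric_{\widetilde{g}}(V,W)=Ric_{g_F}(V,W)-[f\Delta_{\overline{g}}f+(m-1)|\nabla f|^2_{\overline{g}}]g_F(V,W)$, together with the short computation $Hess_{\widetilde{g}}h(V,W)=\tfrac{1}{f}\overline{g}(\nabla f,\nabla h)\widetilde{g}(V,W)$, which follows from $\widetilde{\Gamma}^{i}_{\alpha\beta}=-\overline{g}^{ik}ff,_{x_k}g_{F\alpha\beta}$. Substituting $Ric_{g_F}=\la_F g_F$, $\widetilde{g}(V,W)=f^{2}g_F(V,W)$ and $\overline{g}^{ii}=\varphi^{2}\veps_i$, and expanding
\begin{equation*}
\Delta_{\overline{g}}f=\varphi^{2}\sum_k\veps_k f,_{x_kx_k}-(n-2)\varphi\sum_k\veps_k\varphi,_{x_k}f,_{x_k},\quad |\nabla f|^{2}_{\overline{g}}=\varphi^{2}\sum_k\veps_k(f,_{x_k})^2,
\end{equation*}
and $\overline{g}(\nabla f,\nabla h)=\varphi^{2}\sum_k\veps_k f,_{x_k}h,_{x_k}$, the fiber equation reduces, after multiplication by $h$, exactly to (\ref{eqphll}). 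Since every reduction is an equivalence, the converse direction is automatic.

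The main obstacle is bookkeeping: the three conformal-change formulas (for $Ric_{\overline{g}}$, $Hess_{\overline{g}}f$ and $Hess_{\overline{g}}h$) each split into a diagonal term, a symmetric cross term and a trace correction, and they must combine so that the mixed products $\varphi,_{x_i}f,_{x_j}$, $\varphi,_{x_j}f,_{x_i}$, $\varphi,_{x_i}h,_{x_j}$ and $\varphi,_{x_j}h,_{x_i}$ appear with precisely the coefficients displayed in (\ref{eqphij})--(\ref{eqphii}). Tracking the common factor $fh\varphi$ (respectively $h$ in the fiber equation) through the cancellations is what forces the system into the symmetric form stated.
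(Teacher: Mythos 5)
Your proposal is correct and follows essentially the same route as the paper's proof: substituting the O'Neill warped-product formulas together with the conformal-change expressions for $Ric_{\overline{g}}$ and $Hess_{\overline{g}}$ into the quasi-Einstein equation, splitting into off-diagonal, diagonal, mixed (trivially zero) and fiber components, and clearing denominators by $fh\varphi$ (resp. $fh\varphi^2$, $h$) to obtain the three stated equations. The only cosmetic difference is that the paper treats the case $m=1$ separately at the end, whereas your use of the O'Neill fiber formula with $Ric_{g_F}=\lambda_F g_F$ handles all $m\geq 1$ uniformly, which is fine.
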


As the problem posed is difficult to be studied in the general case, one way of treating the problem is to try to find solutions that are invariant by subgroups of isometries of the space. In this sense, we will try to find solutions invariant by the action of an $(n-1)$-dimensional translation group. 

We want to find solutions of the system (\ref{eqphij}), (\ref{eqphii}) and (\ref{eqphll}) of the form $f(\xi)$, $h(\xi)$ and $\varphi(\xi)$, where $\xi=\sum\limits_{i=1}^{n}\alpha_ix_i$, $\alpha_i\in\mathbb{R}$. Whenever $\sum\limits_{i=1}^{n}\varepsilon_i\alpha_i^2\neq 0$, without loss of generality, we may assume that $\sum\limits_{i=1}^{n}\varepsilon_i\alpha_i^2=\pm 1$. The following theorem provides the system of ordinary differential equations that must be satisfied by such solutions.

\begin{theorem} \label{teor3}
Let $(\R^n,g)$ be a pseudo-Euclidean space, $n\geq 3$ with coordinates $x=(x_1,\cdots, x_n)$ and $g_{ij}=\delta_{ij}\veps_i$. Consider a warped product $M = (\mathbb{R}^{n}, \overline{g})\times _{f}F^{m}$ with metric $\widetilde{g} = \overline{g} + f^{2}g_{F}$, where $\displaystyle \overline{g} =
\frac{1}{\varphi^{2}}g$,  $F$ is a Einstein semi-Riemannian manifold with constant Ricci curvature $\lambda_{F}$, $m\geq 1$, and $f(\xi)$, $\varphi(\xi)$ e $h(\xi)$ are smooth functions, with $\xi=\sum\limits_{i=1}^{n}\alpha_ix_i$, $\alpha_i\in\mathbb{R}$, and  $\sum\limits_{i=1}^{n}\varepsilon_i\alpha_i^2=\varepsilon_{i_0}$ or $\sum\limits_{i=1}^{n}\varepsilon_i\alpha_i^2=0$. Then the $(M,\widetilde{g},h)$ is a quasi-Einstein semi-Riemannian manifold if, only if, the functions $f$, $\varphi$ and $h$ satisty
\begin{itemize}
\item[(i)]
\begin{equation} \label{teor3sistemavetorNAOnulo}
\begin{cases}
(n-2)fh\varphi''-rf\varphi h''-mh\varphi f''-2mh\varphi'f'-2rf\varphi'h'=0\\
\displaystyle \sum\limits_{k=1}^{n}\varepsilon_k\alpha_k^2[fh\varphi\varphi''-(n-1)fh(\varphi')^2+mh\varphi\varphi'f'+rf\varphi\varphi'h']=\rho fh\\
\displaystyle \sum\limits_{k=1}^{n}\varepsilon_k\alpha_k^2[-fh\varphi^2f''+(n-2)fh\varphi\varphi'f'-(m-1)h\varphi^2(f')^2-rf\varphi^2f'h']=h[\rho f^2-\lambda_F]\\
\end{cases}
\end{equation}
whenever $\sum\limits_{i=1}^{n}\varepsilon_i\alpha_i^2=\varepsilon_{i_0}$, and
\item[(ii)]
\begin{equation} \label{teor3sistemavetornulo}
\begin{cases}
(n-2)fh\varphi'' - rf\varphi h'' - mh\varphi f'' -2mh\varphi'f'-2rf\varphi'h'=0\\
\rho=\lambda_F=0\\
\end{cases}
\end{equation}
whenever $\sum\limits_{i=1}^{n}\varepsilon_i\alpha_i^2=0$.
\end{itemize}
\end{theorem}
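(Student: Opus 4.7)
The plan is to invoke Theorem \ref{teor2}, which characterizes the quasi-Einstein condition for $\widetilde{g}$ by the PDE system (\ref{eqphij})--(\ref{eqphll}), and specialize it to functions of the form $f(\xi)$, $\varphi(\xi)$, $h(\xi)$ with $\xi=\sum_{i}\alpha_i x_i$. By the chain rule, $f,_{x_i}=\alpha_i f'$, $f,_{x_ix_j}=\alpha_i\alpha_j f''$, and the analogous identities hold for $\varphi$ and $h$. Substituting into (\ref{eqphij}) for $i\neq j$, the left-hand side factors as $\alpha_i\alpha_j\,E_1(\xi)$, where
\[ E_1:=(n-2)fh\varphi''-rf\varphi h''-mh\varphi f''-2mh\varphi'f'-2rf\varphi'h'. \]
For (\ref{eqphii}) substitution gives
\[ \alpha_i^{2}\varphi\, E_1+\varepsilon_i\Bigl(\sum_k\varepsilon_k\alpha_k^{2}\Bigr)E_2=\varepsilon_i\rho fh, \]
with $E_2:=fh\varphi\varphi''-(n-1)fh(\varphi')^{2}+mh\varphi\varphi'f'+rf\varphi\varphi'h'$. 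Finally (\ref{eqphll}) becomes $\bigl(\sum_k\varepsilon_k\alpha_k^{2}\bigr)E_3=h[\rho f^{2}-\lambda_F]$, where $E_3$ is the obvious bracket from the left-hand side of (\ref{eqphll}).

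In case (i), $\sum_k\varepsilon_k\alpha_k^{2}=\varepsilon_{i_0}\neq 0$. If at least two coefficients $\alpha_i$ are nonzero, then (\ref{eqphij}) forces $E_1=0$, and the identity above reduces to $\varepsilon_{i_0}E_2=\rho fh$, which is the second ODE of (\ref{teor3sistemavetorNAOnulo}). If only a single $\alpha_{i_0}$ is nonzero (so necessarily $\alpha_{i_0}^{2}=1$), (\ref{eqphij}) is trivially satisfied; writing (\ref{eqphii}) for $i=i_0$ and for $i\neq i_0$ and subtracting, one isolates $\varphi E_1=0$, hence $E_1=0$ since $\varphi>0$, and the remaining equation gives again $\varepsilon_{i_0}E_2=\rho fh$. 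Equation (\ref{eqphll}) directly yields $\varepsilon_{i_0}E_3=h[\rho f^{2}-\lambda_F]$, the third ODE of (\ref{teor3sistemavetorNAOnulo}).

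In case (ii), $\sum_k\varepsilon_k\alpha_k^{2}=0$, which forces at least two nonzero $\alpha_i$, since a single nonzero coefficient would give a nonzero signed sum. Thus (\ref{eqphij}) yields $E_1=0$; the specialization of (\ref{eqphii}) then collapses to $\varepsilon_i\rho fh=0$ for every $i$, so $\rho=0$; substituting into the specialization of (\ref{eqphll}) gives $h\lambda_F=0$, hence $\lambda_F=0$. Only the first ODE of (\ref{teor3sistemavetornulo}) survives. The converse in both cases is immediate: assuming the ODEs, a direct back-substitution of $f(\xi),\varphi(\xi),h(\xi)$ recovers (\ref{eqphij})--(\ref{eqphll}) and hence the quasi-Einstein condition via Theorem \ref{teor2}.

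The main obstacle is the degenerate subcase of (i) in which only one $\alpha_{i_0}$ is nonzero: there (\ref{eqphij}) gives no information, so $E_1=0$ must be extracted from the $\varepsilon_i$-dependence of (\ref{eqphii}) by splitting the indices into $i=i_0$ and $i\neq i_0$. Once this bookkeeping is in place, the remainder of the argument is a mechanical chain-rule computation, simplified by the fact that $E_1,E_2,E_3$ depend only on $\xi$ and isolate cleanly after factoring $\alpha_i\alpha_j$ and $\sum_k\varepsilon_k\alpha_k^{2}$.
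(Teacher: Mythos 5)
Your proposal is correct and follows essentially the same route as the paper: specialize Theorem \ref{teor2} via the chain rule, factor out $\alpha_i\alpha_j$ and $\sum_k\varepsilon_k\alpha_k^2$, and handle the degenerate subcase $\xi=x_{i_0}$ by comparing equation (\ref{eqphii}) for $i=i_0$ against $i\neq i_0$ to extract the first ODE. The only cosmetic difference is that you invoke ``$\varphi>0$'' where nonvanishing of $\varphi$ (forced by $\overline{g}=\varphi^{-2}g$) is what is actually available and suffices.
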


In the following three results we describe all the solutions of (\ref{teor3sistemavetorNAOnulo}) when $\rho=0$ and $F^m$ is a Ricci-flat manifold. In the first two theorems, we studied separately the cases $r\neq 1$ and $r=1$.

\begin{theorem} \label{teor4}
Let $(\R^n,g)$ be a pseudo-Euclidean space, $n\geq 3$ with coordinates $x=(x_1,\cdots, x_n)$ and $g_{ij}=\delta_{ij}\veps_i$. Consider smooth functions $f(\xi)$, $h(\xi)$ and $\varphi(\xi)$, where $\xi=\sum\limits_{i=1}^{n}\alpha_ix_i$, $\alpha_i\in\mathbb{R}$, and  $\sum\limits_{i=1}^{n}\varepsilon_i\alpha_i^2=\varepsilon_{i_0}$, given by
\begin{equation} \label{teor4caradasfuncoes}
\begin{cases}
f_\pm (\xi) = c_1[(a-rN_\pm)\xi+c]^{-\frac{1}{a-rN_\pm}}\\
h_\pm (\xi) = c_2[(a-rN_\pm)\xi+c]^{-\frac{N_\pm}{a-rN_\pm}}\\
\varphi_\pm (\xi) = c_3[(a-rN_\pm)\xi+c]^{-\frac{k}{a-rN_\pm}}
\end{cases},
\end{equation}
with $k,c_1,c_2,c_3,N_\pm \in\mathbb{R}$, $k,c_1,c_2,c_3>0$, 
$$N_\pm=\frac{r(k+a)\pm\sqrt{r^2(k+a)^2-r(r-1)(a^2-b)}}{r(r-1)}, \ \ \ r>0, \ \ r\neq 1, \ \ \ r(k+a)^2\geq (r-1)(a^2-b),$$
$a=(n-2)k-m$ and $b=m(2k+1)-(n-2)k^2$. Then $(\mathbb{R}^{n}\times _{f}F^{m},\widetilde{g},h)$ is a quasi-Einstein semi-Riemannian manifold, with $\widetilde{g} = \overline{g} + f^{2}g_{F}$, $\displaystyle \overline{g} = \frac{1}{\varphi^{2}}g$ and $F$ Ricci-flat. These solutions are defined on the half space defined by $\displaystyle \sum\limits_{i=1}^{n}\alpha_ix_i>-\frac{c}{a-rN_\pm}$.
\end{theorem}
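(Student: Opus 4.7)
The plan is to invoke Theorem \ref{teor3}(i): since $\sum_{i=1}^{n}\varepsilon_i\alpha_i^2=\varepsilon_{i_0}$, $\rho=0$, and $F$ is Ricci-flat ($\lambda_F=0$), it suffices to verify that the triple $(f_\pm,\varphi_\pm,h_\pm)$ satisfies the three ODEs of (\ref{teor3sistemavetorNAOnulo}). Motivated by the power structure visible in (\ref{teor4caradasfuncoes}), I first substitute the ansatz $h=A f^{N}$ and $\varphi=B f^{k}$ for constants $A,B\in\mathbb{R}$ and an exponent $N\in\mathbb{R}$ to be matched (with $k>0$ a free parameter of the family). The chain rule expresses $h'$, $h''$, $\varphi'$, $\varphi''$ in terms of $f$, $f'$, $f''$ only, so each of the three ODEs of (\ref{teor3sistemavetorNAOnulo}) becomes a linear combination of $f^{\star}f''$ and $f^{\star-1}(f')^2$ for a fixed overall degree $\star$; dividing out this common power of $f$ collapses each equation to a two-term relation between $ff''$ and $(f')^2$.

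The central calculation is that, under this reduction, equations two and three both collapse to the \emph{same} ODE
$$ ff'' \;=\; (1+a-rN)(f')^2,$$
while equation one collapses to
$$ (a-rN)\,ff'' \;+\; \bigl[\,rN-rN^2-2rNk-(a+b)\,\bigr](f')^2 \;=\; 0.$$
Compatibility of these two relations forces a single scalar condition on $N$; a routine expansion shows it is exactly
$$ r(r-1)\,N^2 \;-\; 2r(a+k)\,N \;+\; (a^2-b) \;=\; 0,$$
whose two roots are precisely $N_\pm$, real under the stated discriminant condition $r(k+a)^2\geq (r-1)(a^2-b)$.

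It then remains only to integrate $ff''=(1+a-rN_\pm)(f')^2$. Writing $u=f'/f$ converts this into $u'=(a-rN_\pm)\,u^2$, a separable Riccati equation whose solution is $u(\xi)=-1/[(a-rN_\pm)\xi+c]$; one further integration gives $f_\pm(\xi)=c_1[(a-rN_\pm)\xi+c]^{-1/(a-rN_\pm)}$. Substituting back into the ansatz reproduces $h_\pm$ and $\varphi_\pm$ in the form (\ref{teor4caradasfuncoes}) once $A c_1^{N_\pm}$ and $B c_1^{k}$ are relabeled $c_2$ and $c_3$, and positivity of the linear factor inside the bracket restricts the domain to the half-space $\sum_{i=1}^{n}\alpha_i x_i>-c/(a-rN_\pm)$.

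The main obstacle is the bookkeeping in the substitution step: one must verify that equations two and three truly reduce to the \emph{same} two-term relation modulo a common power of $f$, since only this coincidence allows a single scalar quadratic in $N$ to govern all the consistency. Once this is confirmed, the remaining work is purely algebraic. A minor caveat is the degenerate case $a-rN_\pm=0$, which would require an exponential ansatz in place of the power solution and is tacitly excluded by the form of the answer (together with the hypothesis $r\neq 1$).
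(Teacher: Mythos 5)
Your proposal is correct and follows essentially the same route as the paper: the paper likewise reduces to the proportionalities $\varphi'/\varphi = k\,f'/f$ and $h'/h = N\,f'/f$ (there derived from the system via the substitution $x=f'/f$, $y=h'/h$, $y=Nx$, rather than posited as an ansatz), arrives at the same quadratic $r(r-1)N^2-2r(k+a)N+a^2-b=0$ determining $N_\pm$, and integrates the same separable equation $x'=(a-rN_\pm)x^2$ to obtain (\ref{teor4caradasfuncoes}). Your collapsed relations and the compatibility computation check out, so the verification via Theorem \ref{teor3}(i) with $\rho=\lambda_F=0$ is sound.
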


\begin{theorem} \label{teor5}
Let $(\R^n,g)$ be a pseudo-Euclidean space, $n\geq 3$ with coordinates $x=(x_1,\cdots, x_n)$ and $g_{ij}=\delta_{ij}\veps_i$. Consider smooth functions $f(\xi)$, $h(\xi)$ and $\varphi(\xi)$, where $\xi=\sum\limits_{i=1}^{n}\alpha_ix_i$, $\alpha_i\in\mathbb{R}$, and  $\sum\limits_{i=1}^{n}\varepsilon_i\alpha_i^2=\varepsilon_{i_0}$, given by
\begin{equation} \label{teor5caradasfuncoes}
\begin{cases}
f(\xi) = c_1[(a-N)\xi+c]^{-\frac{1}{a-N}}\\
h(\xi) = c_2[(a-N)\xi+c]^{-\frac{N}{a-N}}\\
\varphi(\xi) = c_3[(a-N)\xi+c]^{-\frac{k}{a-N}}
\end{cases},
\end{equation}
with $k,c_1,c_2,c_3,N_\pm \in\mathbb{R}$, $k,c_1,c_2,c_3>0$, 
$$N=\frac{a^2-b}{2(k+a)}, \ \ \ k+a\neq 0, \ \ \ r=1,$$
$a=(n-2)k-m$ and $b=m(2k+1)-(n-2)k^2$. Then $(\mathbb{R}^{n}\times _{f}F^{m},\widetilde{g},h)$ is a quasi-Einstein semi-Riemannian manifold, with $\widetilde{g} = \overline{g} + f^{2}g_{F}$, $\displaystyle \overline{g} = \frac{1}{\varphi^{2}}g$ and $F$ Ricci-flat. These solutions are defined on the half space defined by $\displaystyle \sum\limits_{i=1}^{n}\alpha_ix_i>-\frac{2c(k+a)}{a^2+b+2ka}$.
\end{theorem}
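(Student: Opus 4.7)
The plan is to verify directly that the explicit formulas in (\ref{teor5caradasfuncoes}) satisfy the system (\ref{teor3sistemavetorNAOnulo}) with $r=1$, $\rho=0$ and $\lambda_F=0$. The guiding observation is that the three functions share the power-law structure $\varphi = A f^{k}$ and $h = B f^{N}$, where $A = c_{3}/c_{1}^{k}$ and $B = c_{2}/c_{1}^{N}$; this is the natural degeneration of the ansatz underlying Theorem \ref{teor4}, since the quadratic $r(r-1)N^{2} - 2r(k+a)N + (a^{2}-b) = 0$ that defines $N_{\pm}$ collapses at $r=1$ to the linear equation $2(k+a)N = a^{2}-b$, yielding the single value $N=(a^{2}-b)/[2(k+a)]$.

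The first step is to compute the logarithmic derivatives $\varphi'/\varphi = k\,f'/f$, $\varphi''/\varphi = k(k-1)(f'/f)^{2} + k\,f''/f$, together with the analogous expressions for $h$ with $N$ in place of $k$, and substitute them into the three equations of (\ref{teor3sistemavetorNAOnulo}) with $r=1$. Dividing the third equation by $h\varphi^{2}$ and using $\lambda_{F}=0$, and writing $a=(n-2)k-m$, one obtains the single ODE
\[
f f'' = (a - N + 1)(f')^{2}.
\]
Dividing the second equation by $fh\varphi^{2}$ and using $\rho=0$ yields the same ODE, provided $k \neq 0$ (which holds since $k>0$).

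For the first equation, dividing by $fh\varphi$ gives
\[
(a-N)\,\frac{f''}{f} + \bigl(-a - b + N - N^{2} - 2kN\bigr)\Bigl(\frac{f'}{f}\Bigr)^{2} = 0.
\]
Substituting $ff'' = (a-N+1)(f')^{2}$ and using $(a-N)^{2} - N^{2} = a(a-2N)$, this collapses to the algebraic constraint $a^{2}-b = 2N(k+a)$; since $k+a\neq 0$, this fixes $N$ as in the statement. Hence the three PDE constraints in (\ref{teor3sistemavetorNAOnulo}) are simultaneously satisfied exactly when $f$ solves the ODE above and $N$ takes the value specified.

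It remains to integrate the ODE. Writing it as $f''/f' = (a-N+1)\,f'/f$ and integrating twice, one finds that $f^{-(a-N)}$ is affine in $\xi$; absorbing constants produces $f(\xi) = c_{1}[(a-N)\xi + c]^{-1/(a-N)}$, and the formulas for $\varphi$ and $h$ then follow at once from $\varphi = Af^{k}$ and $h = Bf^{N}$. The positivity requirement $(a-N)\xi + c > 0$ translates, via the identity $a - N = (a^{2} + 2ak + b)/[2(k+a)]$, precisely into the half-space stated in the theorem. The only real obstacle is algebraic bookkeeping: one must carry out the three substitutions carefully enough to see that they reduce to one ODE and one linear constraint on $N$. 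The case $r=1$ is in fact simpler than Theorem \ref{teor4}, because the quadratic in $N$ has only one finite root and no $\pm$ dichotomy appears.
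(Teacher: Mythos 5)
Your proposal is correct and takes essentially the same route as the paper: both reduce, via Theorem \ref{teor3} with $\rho=\lambda_F=0$, to the ODE system in the logarithmic derivatives with $\varphi'/\varphi = k\,f'/f$ and $h'/h = N\,f'/f$, arrive at the same linear constraint $a^2-b = 2N(k+a)$ (the $r=1$ degeneration of the quadratic defining $N_\pm$), and integrate the same equation $f f'' = (a-N+1)(f')^2$, i.e. $x' = (a-N)x^2$ for $x = f'/f$. The only difference is one of direction: the paper derives the formulas from the ansatz $y = Nx$ in its system (\ref{sistemaI}), whereas you verify the stated formulas directly, which is equally valid for this sufficiency statement.
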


\begin{theorem} \label{teortudoimplicito}
Let $(\R^n,g)$ be a pseudo-Euclidean space, $n\geq 3$ with coordinates $x=(x_1,\cdots, x_n)$ and $g_{ij}=\delta_{ij}\veps_i$. Consider the functions $x(\xi)$ and $z(\xi)$,where $\xi=\sum\limits_{i=1}^{n}\alpha_ix_i$, $\alpha_i\in\mathbb{R}$, and  $\sum\limits_{i=1}^{n}\varepsilon_i\alpha_i^2=\varepsilon_{i_0}$, given by
\begin{equation} \label{teortudoimplicitoeqs1}
\begin{cases}
v(z)z'- c(a-rz)e^{\int v(z)dz}=0, \ \ \ \ \ c>0,\\
x=ce^{\int v(z)dz}
\end{cases}
\end{equation}
where $\displaystyle v(z)=\frac{r(a-rz)}{r(r-1)z^2-2r(k+a)z+a^2-b}$, with $k>0$, $a=(n-2)k-m$, $b=m(2k+1)-(n-2)k^2$, $n,m\in\mathbb{N}$, $n\geq 3$. Let $f(\xi)$, $\varphi(\xi)$ and $h(\xi)$ be functions obtained by integrating
\begin{equation} \label{teortudoimplicitofhvarphi}
\frac{f'}{f}(\xi)=x(\xi), \ \ \ \ \ \frac{\varphi'}{\varphi}(\xi)=kx(\xi), \ \ \ \ \ \frac{h'}{h}(\xi)=x(\xi)z(\xi).
\end{equation}
Then $(\mathbb{R}^{n}\times _{f}F^{m},\widetilde{g},h)$ is a quasi-Einstein semi-Riemannian manifold, with $\widetilde{g} = \overline{g} + f^{2}g_{F}$, $\displaystyle \overline{g} = \frac{1}{\varphi^{2}}g$ and $F$ Ricci-flat.
\end{theorem}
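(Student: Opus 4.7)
The plan is to reduce the PDE system (\ref{teor3sistemavetorNAOnulo}), taken with $\rho=0$ and $\lambda_F=0$, to a planar ODE system in the two unknowns $x(\xi)$ and $z(\xi)$, and then to verify that the implicit relations (\ref{teortudoimplicitoeqs1}) solve that reduced system. By Theorem \ref{teor3}, case (i), it is enough to check that the functions $f,\varphi,h$ built from (\ref{teortudoimplicitofhvarphi}) satisfy the three equations of (\ref{teor3sistemavetorNAOnulo}).

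First I will perform the substitutions $f'/f=x$, $\varphi'/\varphi=kx$, $h'/h=xz$ and compute $f''/f=x'+x^{2}$, $\varphi''/\varphi=kx'+k^{2}x^{2}$, and $h''/h=x'z+xz'+x^{2}z^{2}$. Inserting these into the second equation of (\ref{teor3sistemavetorNAOnulo}) divided by $fh\varphi^{2}$, and into the third equation divided by $f^{2}h\varphi^{2}$, and using the abbreviations $a=(n-2)k-m$ and $b=m(2k+1)-(n-2)k^{2}$, I expect both equations to collapse to the single identity
\begin{equation*}
x' = (a-rz)\,x^{2}.
\end{equation*}
Using this to eliminate $x'$ in the first equation of (\ref{teor3sistemavetorNAOnulo}), divided by $fh\varphi$, the remaining relation will simplify to
\begin{equation*}
r\,z' = x\bigl[r(r-1)z^{2} - 2r(k+a)z + a^{2} - b\bigr] = x\,D(z),
\end{equation*}
where $D(z)$ is precisely the denominator of $v(z)$.

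It then remains to verify that the pair $(x(\xi),z(\xi))$ defined implicitly by (\ref{teortudoimplicitoeqs1}) satisfies these two reduced ODEs. Differentiating $x = ce^{\int v(z)\,dz}$ gives $x' = v(z)z'\,x$; combining this with the first relation of (\ref{teortudoimplicitoeqs1}), rewritten as $v(z)z' = (a-rz)x$, yields $x' = (a-rz)x^{2}$ at once. Substituting the formula $v(z) = r(a-rz)/D(z)$ into the same relation and simplifying gives $rz' = xD(z)$, as required. Both reduced ODEs then hold, and the theorem follows.

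The main obstacle I anticipate is the algebraic bookkeeping in the reduction step: showing that the second and third equations of (\ref{teor3sistemavetorNAOnulo}) really do collapse to the same ODE $x'=(a-rz)x^{2}$ depends on a clean cancellation that uses both the specific form of $a=(n-2)k-m$ and the identity defining $b$; similarly, the quadratic in $z$ emerging in $rz'=xD(z)$ must match the denominator of $v$ exactly, and it is this coincidence that forces the particular shape of $v(z)$ stated in the theorem. Once these cancellations are verified the conclusion is immediate.
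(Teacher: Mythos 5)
Your proposal is correct and follows essentially the same route as the paper: with $\rho=\lambda_F=0$, the substitution $f'/f=x$, $\varphi'/\varphi=kx$, $h'/h=xz$ into (\ref{teor3sistemavetorNAOnulo}) does collapse the second and third equations to $x'=(a-rz)x^2$ and the first to $rz'=xD(z)$, which is exactly the paper's system (\ref{teortudoimplicitosistema}), and your check that (\ref{teortudoimplicitoeqs1}) satisfies these two ODEs matches the paper's argument. The only (immaterial) difference is direction of presentation: the paper obtains (\ref{teortudoimplicitoeqs1}) by separating variables in (\ref{teortudoimplicitosistema}), whereas you verify the given implicit relations by differentiating $x=ce^{\int v(z)dz}$ — the same computation run in reverse, with the same implicit nondegeneracy assumptions ($x\neq 0$, $a-rz\neq 0$) that the paper also leaves tacit.
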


\begin{theorem} \label{teorseesomentese}
Let $(\R^n,g)$ be a pseudo-Euclidean space, $n\geq 3$ with coordinates $x=(x_1,\cdots, x_n)$ and $g_{ij}=\delta_{ij}\veps_i$. Let $(\mathbb{R}^{n}\times _{f}F^{m},\widetilde{g},h)$ be a quasi-Einstein semi-Riemannian manifold, with $\widetilde{g} = \overline{g} + f^{2}g_{F}$, $\displaystyle \overline{g} = \frac{1}{\varphi^{2}}g$ and $F$ Ricci-flat. Then $f$, $\varphi$ and $h$ are invariant under an $(n−1)$-dimensional translation group whose basic invariant is $\xi=\sum\limits_{i=1}^{n}\alpha_ix_i$, $\alpha_i\in\mathbb{R}$, where $\alpha=\sum_{i=1}^{n}\alpha_i\frac{\partial}{\partial x_i}$ is a non-null vector if, and only if, $f$, $\varphi$ and $h$ are given as in Theorems \ref{teor4}, \ref{teor5} or \ref{teortudoimplicito}.
\end{theorem}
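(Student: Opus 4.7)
The converse is immediate: Theorems \ref{teor4}, \ref{teor5}, and \ref{teortudoimplicito} explicitly furnish quasi-Einstein warped products whose $f$, $\varphi$, $h$ depend only on $\xi=\sum_{i=1}^{n}\alpha_i x_i$ with $\sum_{i=1}^{n}\varepsilon_i\alpha_i^2=\varepsilon_{i_0}\neq 0$, so $\alpha$ is non-null. For the forward direction, suppose $(\mathbb{R}^n\times_f F^m,\widetilde g,h)$ is quasi-Einstein with $f,\varphi,h$ depending only on such a $\xi$. Rescaling $\alpha$, we may assume $\sum\varepsilon_i\alpha_i^2=\varepsilon_{i_0}$, and Theorem \ref{teor3}(i) then reduces the problem to the ODE system (\ref{teor3sistemavetorNAOnulo}) with $\rho=0$ and $\lambda_F=0$.

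The core reduction is to introduce the logarithmic derivatives $x=f'/f$, $y=\varphi'/\varphi$, $w=h'/h$. Dividing the second equation of (\ref{teor3sistemavetorNAOnulo}) by $fh\varphi^2$ and the third by the same factor, and using $f''/f=x'+x^2$, $\varphi''/\varphi=y'+y^2$, one obtains the symmetric pair
\[
\frac{y'}{y}=(n-2)y-mx-rw,\qquad \frac{x'}{x}=(n-2)y-mx-rw.
\]
Hence $(\ln(y/x))'=0$, so there is a constant $k$ with $y=kx$, i.e.\ $\varphi'/\varphi=k\,f'/f$. Both of these equations then collapse to $x'=x^{2}(a-rz)$, where $z:=w/x$ and $a=(n-2)k-m$. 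Substituting $y=kx$ and this expression for $x'$ into the first equation of (\ref{teor3sistemavetorNAOnulo}), after routine cancellation and with $b=m(2k+1)-(n-2)k^2$, yields the single ODE
\[
rz'=x\bigl(r(r-1)z^{2}-2r(k+a)z+(a^{2}-b)\bigr).
\]
Treating $z$ as the independent variable gives $x^{-1}\,dx/dz=v(z)$, with $v$ exactly the rational function appearing in Theorem \ref{teortudoimplicito}; integration yields $x=ce^{\int v(z)\,dz}$ and reproduces precisely (\ref{teortudoimplicitoeqs1}), while (\ref{teortudoimplicitofhvarphi}) then recovers $f,\varphi,h$. Thus every invariant solution lies in the family of Theorem \ref{teortudoimplicito}.

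The explicit families of Theorems \ref{teor4} and \ref{teor5} arise exactly when $z$ is constant: constancy forces $r(r-1)z^{2}-2r(k+a)z+(a^{2}-b)=0$, whose roots give $z=N_{\pm}$ for $r\neq 1$ and $z=N$ for $r=1$; then $x'=x^{2}(a-rz)$ is elementary and integrates to the power-function forms $f,\varphi,h$ displayed in (\ref{teor4caradasfuncoes}) and (\ref{teor5caradasfuncoes}). I expect the main technical obstacle to lie in the careful handling of degenerate branches --- the possibilities $y\equiv 0$, $x\equiv 0$, or the quadratic in $z$ sharing a zero with $a-rz$ along a trajectory --- which must be excluded, or absorbed into the three parameterizations, by invoking non-triviality of the warped product together with positivity of $f$ and $\varphi$, so that no invariant quasi-Einstein solution escapes the three theorems.
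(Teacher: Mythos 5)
Your proposal is correct and follows essentially the same route as the paper: reduce via Theorem \ref{teor3}(i) with $\rho=\lambda_F=0$ to the ODE system in the logarithmic derivatives, deduce $\varphi'/\varphi=k\,f'/f$, and then split on whether the ratio $z=(h'/h)/(f'/f)$ is constant (giving Theorems \ref{teor4} and \ref{teor5}) or non-constant (giving Theorem \ref{teortudoimplicito}), with the converse immediate. Your closing remark about the degenerate branches ($x\equiv 0$, $y\equiv 0$, shared zeros) points to divisions the paper also performs tacitly, so it is a fair caveat rather than a deviation.
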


The following theorem shows that there are infinitely many quasi-Einstein semi-Riemannian warped product $(M=\mathbb{R}^n\times_fF^m,\widetilde{g},h)$ with zero $r$-Bakry-Emery tensor, where $\widetilde{g} = \overline{g} + f^{2}g_{F}$, $\overline{g}=\frac{1}{\varphi^2}g$ and $F$ is a Ricci-flat manifold, which are invariant under the action of an $(n−1)$–dimensional group acting on $\mathbb{R}^n$, when $\displaystyle \alpha=\sum_{i=1}^{n}\alpha_i\frac{\partial}{\partial x_i}$ is a null vector.

\begin{theorem} \label{teor6}
Let $(\R^n,g)$ be a pseudo-Euclidean space, $n\geq 3$ with coordinates $x=(x_1,\cdots, x_n)$ and $g_{ij}=\delta_{ij}\veps_i$. Consider $f(\xi)$ and $\varphi(\xi)$ any positive differentiable functions, where $\xi=\sum\limits_{i=1}^{n}\alpha_ix_i$ and $\sum\limits_{i=1}^{n}\varepsilon_i\alpha_i^2=0$, and $h(\xi)$ given by ordinary differential equation
\begin{equation} \label{teor6equacao}
-rf\varphi h''-2rf\varphi'h'+[(n-2)f\varphi''-m\varphi f''-2m\varphi'f']h=0.
\end{equation}
Then $(M=\mathbb{R}^n\times_fF^m,\widetilde{g},h)$ is a quasi-Einstein semi-Riemannian manifold, where $\widetilde{g} = \overline{g} + f^{2}g_{F}$, $\overline{g}=\frac{1}{\varphi^2}g$ and $F$ is a Ricci-flat manifold.
\end{theorem}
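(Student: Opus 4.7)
The plan is to recognize Theorem \ref{teor6} as a direct specialization of Theorem \ref{teor3}(ii) and then verify that the hypotheses of the current theorem match the surviving conditions of that earlier system.

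First, I would note that since $f(\xi)$, $\varphi(\xi)$ and $h(\xi)$ depend only on the basic invariant $\xi=\sum_{i=1}^{n}\alpha_i x_i$, they are in particular invariant under an $(n-1)$-dimensional translation group. Since $\alpha=\sum_{i=1}^n\alpha_i\partial_{x_i}$ is null (that is, $\sum_i\veps_i\alpha_i^2=0$) by hypothesis, the characterization in Theorem \ref{teor3}(ii) applies: $(M,\widetilde{g},h)$ is quasi-Einstein if and only if the pair of conditions in (\ref{teor3sistemavetornulo}) holds, namely $\rho=\lambda_F=0$ together with
\[
(n-2)fh\varphi'' - rf\varphi h'' - mh\varphi f'' -2mh\varphi' f' -2rf\varphi' h'=0.
\]

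Next, the conditions $\rho=\lambda_F=0$ are automatic from the statement: we are taking $\rho=0$ (this is the ``zero $r$-Bakry--Emery tensor'' quasi-Einstein case) and $F^m$ is assumed Ricci-flat, so $\lambda_F=0$. It then remains only to observe that the displayed ODE above is literally (\ref{teor6equacao}) after grouping coefficients by the order of derivative of $h$: moving the $h''$ and $h'$ terms to the front and factoring $h$ out of the remaining terms gives
\[
-rf\varphi h''-2rf\varphi' h' + \bigl[(n-2)f\varphi''-m\varphi f''-2m\varphi' f'\bigr]h=0,
\]
which is exactly the hypothesis imposed on $h$ in the theorem. Therefore both conditions of Theorem \ref{teor3}(ii) are satisfied and $(M,\widetilde{g},h)$ is quasi-Einstein.

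There is really no technical obstacle: once one parses what Theorem \ref{teor3}(ii) says in the null case, the present statement is essentially a restatement of the surviving ODE. The only substantive remark worth making is why the family is infinite, which justifies the preamble before the theorem: for each choice of positive smooth $f(\xi)$ and $\varphi(\xi)$, equation (\ref{teor6equacao}) is a linear second-order ODE in $h$ with leading coefficient $-rf\varphi$, which does not vanish; standard ODE theory then yields a two-parameter family of smooth solutions $h(\xi)$, each of which produces a quasi-Einstein warped product.
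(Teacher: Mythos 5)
Your proposal is correct and follows essentially the same route as the paper: the paper's proof also just invokes Theorem \ref{teor3}(ii) for the null direction, notes that $\rho=\lambda_F=0$, and identifies the surviving equation of (\ref{teor3sistemavetornulo}) with the ODE (\ref{teor6equacao}). Your closing remark on the two-parameter family of solutions of the linear ODE is a pleasant addition but is not needed for the statement itself.
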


Let us present two examples illustrating the Theorem \ref{teor6}. Let $f(\xi)=k_1e^{A\xi}$ and $\varphi(\xi)=k_2e^{B\xi}$, where $k_1,k_2>0$, $A,B\in\mathbb{R}$ with $C=r^2B^2+r[(n-2)B^2-mA^2-2mAB]\geq 0$, $\xi=\sum\limits_{i=1}^{n}\alpha_ix_i$ and $\sum\limits_{i=1}^{n}\varepsilon_i\alpha_i^2=0$. In this case, the equation (\ref{teor6equacao}) becomes
$$k_1k_2e^{(A+B)\xi}\{-rh''-2rBh'+[(n-2)B^2-mA^2-2mAB]h\}=0.$$
Then $h$ is given by $h(\xi)=c_1e^{[\frac{-rB+\sqrt{C}}{r}]\xi}+c_2e^{[\frac{-rB-\sqrt{C}}{r}]\xi}$, where $c_1,c_2\in\mathbb{R}$. By Theorem \ref{teor6}, $(M=\mathbb{R}^n\times_fF^m,\widetilde{g},h)$ is a quasi-Einstein manifold with $\rho=0$ and $F$ Ricci-flat.

In \cite{romildobenedito}, the authors considered $\varepsilon_1 = -1$, $\varepsilon_2 = 1$, $\alpha_1 = \alpha_2 = 1$, and $\alpha_l = 0$ with $3 \leq l \leq n$. Then $\displaystyle \sum_{k=1}^{n}\varepsilon_k\alpha_k^2=0$ and $\xi=x_1+x_2$, and under these conditions they showed that if $f(\xi)=e^{(\sqrt{n-1}-1)\xi}$ and $\varphi(\xi)=e^{\xi}$, the warped product $(\mathbb{R}^n,\overline{g})\times_f\mathbb{R}$ with metric $\widetilde{g}=\overline{g}\oplus f^2(-dt^2)$ is complete.
We observed that this is a particular case from the previous example. In fact, just do $A=\sqrt{n-1}-1$ e $B=k_1=k_2=m=1$. In this case, we obtain that $C=r(r-1)$ and $h(\xi)=c_1e^{[\frac{-r+\sqrt{r(r-1)}}{r}]\xi}+c_2e^{[\frac{-r-\sqrt{r(r-1)}}{r}]\xi}$, with $c_1,c_2\in\mathbb{R}$. Therefore,
$$(M,\widetilde{g},h) = \left(\mathbb{R}^n\times_f\mathbb{R},\frac{1}{\varphi^2}\oplus f^2(-dt^2),c_1e^{[\frac{-r+\sqrt{r(r-1)}}{r}]\xi}+c_2e^{[\frac{-r-\sqrt{r(r-1)}}{r}]\xi}\right)$$
is a complete quasi-Einstein manifold, where $f(\xi)=e^{(\sqrt{n-1}-1)\xi}$, $\varphi(\xi)=e^{\xi}$, $\displaystyle \sum_{k=1}^{n}\varepsilon_k\alpha_k^2=0$ and $\xi=x_1+x_2$.

Now, if we choose $f(\xi)=\varphi(\xi)=\xi^2$, where $\xi\in\mathbb{R}^n\setminus\{0\}$, $\xi=\sum\limits_{i=1}^{n}\alpha_ix_i$ and $\sum\limits_{i=1}^{n}\varepsilon_i\alpha_i^2=0$, (\ref{teor6equacao}) becomes
$$\xi^2h''+4\xi h'+\frac{10m-2(n-2)}{r}h=0,$$
which is an Cauchy-Euler equation. Then $h$ is given by
$$h=\begin{cases}
|\xi|^{-\frac{3}{2}}(c_1|\xi|^\lambda+c_2|\xi|^{-\lambda}), & if \ \ 9>\frac{40m-8(n-2)}{r}\\
|\xi|^{-\frac{3}{2}}(c_1+c_2\ln|\xi|), & if \ \ 9=\frac{40m-8(n-2)}{r}\\
|\xi|^{-\frac{3}{2}}\left[c_1\sin(\lambda\ln|\xi|)+c_1\cos(\lambda\ln|\xi|)\right], & if \ \ 9<\frac{40m-8(n-2)}{r}
\end{cases},$$
where $\displaystyle \lambda = \frac{1}{2}\left|9-\frac{40m-8(n-2)}{r}\right|^{\frac{1}{2}}$ and $c_1,c_2\in\mathbb{R}$. By Theorem \ref{teor6}, $(M=\mathbb{R}^n\setminus\{0\}\times_fF^m,\widetilde{g},h)$ is a quasi-Einstein manifold with $\rho=0$ and $F$ Ricci-flat.

In the next two results we used the quasi-Einstein manifolds explicitly obtained in Theorems \ref{teor4} and \ref{teor5}, and implicitly obtained in Theorems \ref{teortudoimplicito} and \ref{teor6}, to build Ricci-flat warped product Einstein manifolds whose base is not locally conformally flat.

\begin{corollary} \label{cor2}
Let $(\R^n,g)$ be a pseudo-Euclidean space, $n\geq 3$ with coordinates $x=(x_1,\cdots, x_n)$ and $g_{ij}=\delta_{ij}\veps_i$. Consider smooth functions $f(\xi)$, $h(\xi)$ and $\varphi(\xi)$, where $\xi=\sum\limits_{i=1}^{n}\alpha_ix_i$, $\alpha_i\in\mathbb{R}$, and  $\sum\limits_{i=1}^{n}\varepsilon_i\alpha_i^2=\varepsilon_{i_0}$, explicitly given by (\ref{teor4caradasfuncoes}) or (\ref{teor5caradasfuncoes}), or implicitly given by (\ref{teor6equacao}) and defined in space $\{\xi\in\mathbb{R}^n/f(\xi)>0, h(\xi)>0,\varphi(\xi)\neq 0\}$. Then $(\mathbb{R}^n\times_fF_1^m)\times_hF_2^r$ with metric $\widetilde{g}=(\overline{g}\oplus f^2g_{F_1})\oplus h^2g_{F_2}$ is a Ricci-flat Einstein semi-Riemannian manifold, where $F_1^m$ and $F_2^r$ are Ricci-flat Einstein manifold of dimension $m$ and $r$ respectively.
\end{corollary}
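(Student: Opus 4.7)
The plan is to verify the three conditions of the Kim--Kim criterion (\ref{eqsdoKimKim}), which remains valid in the semi-Riemannian setting, for the Einstein warped product $(B\times_h F_2^r, \widetilde{g})$, where $B = \mathbb{R}^n\times_f F_1^m$ carries the metric $g_B = \overline{g}\oplus f^2 g_{F_1}$, the function $h$ plays the role of the outer torsion function, and $F_2^r$ is the outer fiber. The target values are $\rho = 0$ (for a Ricci-flat total space) and $\mu = 0$ (consistent with $F_2^r$ being Ricci-flat).

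The first Kim--Kim equation, $Ric_{g_B} = \frac{r}{h}Hess_{g_B} h$, is precisely the quasi-Einstein condition for $(B, g_B, h)$ with $\rho = 0$, which holds by hypothesis via Theorems \ref{teor4}, \ref{teor5}, or \ref{teor6}. The second equation, $Ric_{g_{F_2}} = \mu g_{F_2}$, holds with $\mu = 0$ by the Ricci-flat assumption on $F_2^r$. By Kim--Kim's observation quoted in the introduction, the first equation forces the left-hand side of the third equation to be a constant; what remains is to show this constant equals $0$, so that the third equation is consistent with a Ricci-flat $F_2^r$.

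To evaluate this constant, I would use that $h = h(\xi)$ is pulled back from $\mathbb{R}^n$, and combine the warped-product formula $\Delta_{g_B} h = \Delta_{\overline{g}} h + \frac{m}{f}\langle grad_{\overline{g}} f, grad_{\overline{g}} h\rangle_{\overline{g}}$ with the conformal-change formula $\Delta_{\overline{g}} h = \varphi^2 \Delta_g h - (n-2)\varphi\langle grad_g \varphi, grad_g h\rangle_g$. In the non-null case $\sum_i \veps_i \alpha_i^2 = \veps_{i_0}$, the third Kim--Kim equation with $\mu = 0$ reduces, after clearing the common factor $\veps_{i_0}\varphi/f$, to
$$fh\varphi h'' - (n-2)fh\varphi' h' + mh\varphi f' h' + (r-1)f\varphi(h')^2 = 0.$$
Substituting the power-function forms from Theorems \ref{teor4} and \ref{teor5} with $s = (a-rN)\xi + c$ and factoring out the common power of $s$, the bracket collapses to $N\bigl[a - (n-2)k + m\bigr]$, which vanishes because $a = (n-2)k - m$. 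In the null case (Theorem \ref{teor6}), $\sum_i \veps_i \alpha_i^2 = 0$ forces both $\Delta_g h$ and $\langle grad_g f, grad_g h\rangle_g$ to vanish, hence $\Delta_{g_B} h$ and $|grad_{g_B} h|^2$ vanish identically on $B$, and the third equation is automatic.

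The main obstacle is not conceptual but rather the careful bookkeeping reduction: translating Kim--Kim's intrinsic identity into the one-variable identity above through the conformal and warped-product formulas. Once that reduction is made, the final cancellation is built into the algebraic relation $a = (n-2)k - m$ that underlies Theorems \ref{teor4}--\ref{teor6}; consequently, choosing any Ricci-flat $F_2^r$ produces a Ricci-flat Einstein total space of the iterated warped product structure asserted in the corollary.
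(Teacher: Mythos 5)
Your proposal is correct and follows essentially the same route as the paper: both invoke the Kim--Kim characterization, observe that the first two conditions hold by the quasi-Einstein structure of the base (Theorems \ref{teor4}, \ref{teor5}, \ref{teor6}) and the Ricci-flatness of $F_2$, and then verify that the constant $\mu$ in the third condition is zero by computing $\Delta_{g_B}h$ and $|grad_{g_B}h|^2$ via the conformal and warped-product formulas, with the cancellation coming from $a=(n-2)k-m$ in the non-null case and from the factor $\veps_{i_0}=0$ in the null case. Your one-variable reduction and the collapse of the bracket to $N[a-(n-2)k+m]$ match the paper's computation exactly.
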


The solutions of the ordinary differential equation (\ref{teor6equacao}) generate infinite Einstein manifolds as given in Corollary \ref{cor2}. To exemplify this result, we recall that if $f(\xi)=k_1e^{A\xi}$ and $\varphi(\xi)=k_2e^{B\xi}$, where $k_1,k_2>0$, $A,B\in\mathbb{R}$ with $(n-2)B^2-mA^2-2mAB\geq 0$, $\xi=\sum\limits_{i=1}^{n}\alpha_ix_i$ and $\sum\limits_{i=1}^{n}\varepsilon_i\alpha_i^2=0$, then $h$ is given by $h(\xi)=c_1e^{[\frac{-rB+\sqrt{C}}{r}]\xi}+c_2e^{[\frac{-rB-\sqrt{C}}{r}]\xi}$, where $C=r^2B^2+r[(n-2)B^2-mA^2-2mAB]$, $c_1,c_2\in\mathbb{R}$. Thus $(\mathbb{R}^n\times_fF_1^m,\overline{g}\oplus g_{F_1},h)$ is a quasi-Einstein manifold with $\rho=0$ and $F_1$ Ricci-flat (see example of the Theorem \ref{teor6}). Now, if we choose a Ricci-flat manifold $(F_2^r,g_{F_2})$, by the Corollary \ref{cor2} we have that $(\mathbb{R}^n\times_fF_1^m)\times_hF_2^r$ is a Ricci-flat Einstein manifold.

\begin{corollary} \label{cor3}
Let $(\R^n,g)$ be a pseudo-Euclidean space, $n\geq 3$ with coordinates $x=(x_1,\cdots, x_n)$ and $g_{ij}=\delta_{ij}\veps_i$. Consider smooth functions $f(\xi)$, $h(\xi)$ and $\varphi(\xi)$, where $\xi=\sum\limits_{i=1}^{n}\alpha_ix_i$, $\alpha_i\in\mathbb{R}$, implicitly given by (\ref{teortudoimplicitofhvarphi}), defined in space $\{\xi\in\mathbb{R}^n/f(\xi)>0, h(\xi)>0,\varphi(\xi)\neq 0\}$. Then $(\mathbb{R}^n\times_fF_1^m)\times_hF_2^r$ with metric $\widetilde{g}=(\overline{g}\oplus f^2g_{F_1})\oplus h^2g_{F_2}$ is a Ricci-flat Einstein semi-Riemannian manifold, where $F_1^m$ is a Ricci-flat Einstein manifold of dimension $m$ and $F_2^r$ is a Einstein manifold of dimension $r$.
\end{corollary}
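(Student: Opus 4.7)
The strategy is to combine Theorem \ref{teortudoimplicito} with the Kim–Kim characterization of Einstein warped products recalled in \eqref{eqsdoKimKim}, applied with the intermediate object $(\mathbb{R}^n\times_f F_1^m,\widetilde g)$ playing the role of the new base and $h$ playing the role of the new warping function.

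First, by Theorem \ref{teortudoimplicito}, the functions $f$, $\varphi$, $h$ produced from the implicit system \eqref{teortudoimplicitofhvarphi} make $(\mathbb{R}^n\times_f F_1^m,\widetilde g,h)$ into a quasi-Einstein semi-Riemannian manifold with $\rho=0$ and $F_1$ Ricci-flat on the open set $\{f>0,\ h>0,\ \varphi\neq 0\}$. This is precisely
$$
Ric_{\widetilde g}=\frac{r}{h}\,Hess_{\widetilde g}\,h,
$$
which is exactly the first of the three equations in \eqref{eqsdoKimKim} for the outer warped product
$(\mathbb{R}^n\times_f F_1^m)\times_h F_2^r$ taken with $\rho=0$.

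Next, I would apply the Kim–Kim theorem to this outer warped product. By the cited result (see the paragraph following \eqref{eqsdoKimKim} in the introduction), once the base of a warped product satisfies the first equation of \eqref{eqsdoKimKim}, the warping function automatically satisfies the third equation for some real constant
$$
\mu\;=\;h\,\Delta_{\widetilde g} h+(r-1)\,|grad_{\widetilde g} h|^{2}_{\widetilde g}\;\in\;\mathbb R.
$$
Choosing $F_2^r$ to be any Einstein manifold of dimension $r$ with Ricci constant curvature equal to this $\mu$ (e.g.\ a space form of suitable curvature), the second equation of \eqref{eqsdoKimKim} also holds. All three Kim–Kim conditions then hold with $\rho=0$, so $(\mathbb{R}^n\times_f F_1^m)\times_h F_2^r$ with metric $\widetilde g\oplus h^2 g_{F_2}$ is Einstein with vanishing Ricci tensor, i.e.\ Ricci-flat Einstein.

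The main obstacle, and the only point which is not a direct quotation of a previous result, is to check carefully that $\mu$ above is genuinely constant on $\mathbb{R}^n\times_f F_1^m$. For this, I would use the O'Neill warped-product formulas to reduce $\Delta_{\widetilde g} h$ and $|grad_{\widetilde g} h|^{2}_{\widetilde g}$ to their counterparts on $(\mathbb{R}^n,\overline g)$ (the $F_1$-contributions vanish since $h$ is constant on the $F_1$-fibers and $f$ depends only on $\mathbb{R}^n$), and then use the conformal identity $\overline g=\varphi^{-2}g$ together with the translation invariance $h=h(\xi)$ to reduce everything to a scalar expression in $\xi$. Differentiating in $\xi$ and substituting \eqref{teortudoimplicitoeqs1}–\eqref{teortudoimplicitofhvarphi} yields $\mu'(\xi)\equiv 0$; alternatively, one invokes the Kim–Kim result abstractly, which already gives $\mu\in\mathbb R$ as soon as the first equation of \eqref{eqsdoKimKim} is satisfied. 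Either route completes the proof.
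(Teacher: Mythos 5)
Your proposal is correct and follows essentially the same route as the paper: invoke Theorem \ref{teortudoimplicito} to get the quasi-Einstein base with $\rho=0$, use the Kim--Kim result \cite{Kim-Kim} to conclude that $h$ satisfies the third equation of (\ref{eqsdoKimKim}) for some constant $\mu$, and then choose $F_2^r$ Einstein with Ricci constant $\mu$ to obtain the Ricci-flat Einstein warped product. The extra verification of the constancy of $\mu$ you sketch is not carried out in the paper either, which simply cites \cite{Kim-Kim} abstractly, exactly as in your alternative route.
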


\begin{remark}
The functions $h_\pm$ and $h$, obtained in the Theorems \ref{teor4} and \ref{teor5} respectively, are positives in their definition spaces. For this reason we could use them as torsion functions in Corollary \ref{cor2}.
\end{remark}

\begin{remark}
In the warped product Einstein manifolds obtained in the Corollaries \ref{cor2} and \ref{cor3} the bases $B=(\mathbb{R}^n \times_fF_1^m)$ are not locally conformally flat, due to the result in \cite{BV-GR-VL}.
\end{remark}

\begin{remark}
Note that, as the functions $f$ and $h$ in the Corollaries \ref{cor2} and \ref{cor3} are defined only in space $\mathbb{R}^n$, we have to the Einstein manifolds  $(\mathbb{R}^n\times_fF_1^m)\times_hF_2^1$ are multiply warped products, where the base is $(\mathbb{R}^n,\overline{g})$ and the fibers are $F_1$ and $F_2$ with torsion functions $f$ and $h$, respectively. This is,
$$((\mathbb{R}^n\times_fF_1^m)\times_hF_2^1,(\overline{g}\oplus f^2g_{F_1})\oplus h^2g_{F_2})=(\mathbb{R}^n\times_fF_1^m\times_hF_2^1,\overline{g}\oplus f^2g_{F_1}\oplus h^2g_{F_2}).$$
\end{remark}

 \section{Proofs of the Main Results}
 
\vspace{.2in}

\begin{proof}[Proof of Theorem \ref{teor1}:] 

Let $M=B^n\times_fF^m$ a warped product and $h:M\rightarrow\mathbb{R}$ satisfying
$$Hess_{\widetilde{g}}h(X,Y)=0, \ \ \ \ \forall X\in\mathcal{L}(B),Y\in\mathcal{L}(F).$$
Considering $X_1,\ldots,X_n\in\mathcal{L}(B)$ and $Y_1,\ldots,Y_m\in\mathcal{L}(F)$, where $\mathcal{L}(B)$ and $\mathcal{L}(F)$ are respectively the lift of a vector field on $B$ and $F$ to $B\times F$, we have

\begin{equation} \label{Ricprodtorc}
\begin{cases}
Ric_{\widetilde{g}}(X_i,X_j) = Ric_{g_B}(X_i,X_j) - \frac{m}{f}Hess_{g_B}f(X_i,X_j), \ \forall i,j=1,\ldots,n\\
Ric_{\widetilde{g}}(X_i,Y_j)=0, \ \forall i=1,\ldots,n, j=1,\ldots,m\\
Ric_{\widetilde{g}}(Y_i,Y_j)=Ric_{g_F}(Y_i,Y_j) - \left[\frac{\Delta_{g_B}f}{f}+(m-1)\widetilde{g}\frac{(grad_{\widetilde{g}}f,grad_{\widetilde{g}}f)}{f^2}\right]\widetilde{g}(Y_i,Y_j), \ \forall i,j=1,\ldots,m
\end{cases}
\end{equation}

How for hypothesis
\begin{equation} \label{teor2eq13}
Hess_{\widetilde{g}}h(X_i,Y_j)=0,
\end{equation}

and for definition

$$Hess_{\widetilde{g}}h(X_i,Y_j)=X_iY_j(h)-(\nabla_{X_i}Y_j)(h),$$

$\forall i=1,\ldots,n, j=1,\ldots,m$, where $\nabla$ is the connection of M, since (see \cite{O'neil})
$$\nabla_{X_i}Y_j=\frac{X_i(f)}{f}Y_j,$$
we have
\begin{equation} \label{teor2eq14}
Hess_{\widetilde{g}}h(X_i,Y_j)=h,_{x_iy_j}-\frac{f,_{x_i}}{f}h,_{y_j}.
\end{equation}

Using (\ref{teor2eq13}) and (\ref{teor2eq14}) we obtain

\begin{equation} \label{teor2eq15}
h,_{x_iy_j}-\frac{f,_{x_i}}{f}h,_{y_j}=0.
\end{equation}

If $h=h(x_1,\ldots,x_n)$ then the equation (\ref{teor2eq15}) is trivially satisfied. Suppose that there is at least one $y_j$, with $1\leq j \leq m$ such that $h,_{y_j} \neq 0$ in a point $p\in M$. Then $h,_{y_j} \neq 0$ in a neighbourhood $V_p$ of $p$. In this case, it follows from (\ref{teor2eq15}) that

\begin{equation} \label{teor2eq16}
\frac{h,_{x_iy_j}}{h,_{y_j}}=\frac{f,_{x_i}}{f}, \ \ \ \forall i=1,\ldots,n, j=1,\ldots,m.
\end{equation}

Integrating (\ref{teor2eq16}) in relation to $x_i$ we obtain
$$\ln h,_{y_j} = \ln f + l(\hat{x_i}),$$
this is,
\begin{equation} \label{teor2eq17}
h,_{y_j}=fe^{l(\hat{x_i})}.
\end{equation}
Fixing $i$ and $j$ in (\ref{teor2eq17}) and deriving in relation to $x_k$ with $k\neq i$, we obtain
$$h,_{y_jx_k}=f,_{x_k}e^{l(\hat{x_i})} + fl,_{x_k}e^{l(\hat{x_i})},$$
which is equivalent to
$$\frac{h,_{x_ky_j}}{h,_{y_j}}=\frac{f,_{x_k}}{f} + l,_{x_k}.$$
Using (\ref{teor2eq16}) we have
$$l,_{x_k}=0,$$
and this means that $l$ does not depend $x_k$, this is
$$l=l(\hat{x_i},\hat{x_k}).$$

Repeating this process we obtain that $l$ depends only on the fiber. Therefore
\begin{equation} \label{teor2eq18}
h,_{y_j}=fe^{l(y_1,\ldots,y_m)},
\end{equation}
with $1\leq j \leq m$. Integrating (\ref{teor2eq18}) in relation to $y_j$, we obtain
\begin{equation} \label{teor2eq19}
h(x_1,\ldots,x_n,y_1,\ldots,y_m) = f(x_1,\ldots,x_n)\int e^{l(y_1,\ldots,y_m)}dy_j + m(\hat{y_j}).
\end{equation}

Using the first equation of (\ref{Ricprodtorc}) we have
$$Hess_{\widetilde{g}}h(X_i,X_k) = \rho g_B(X_i,X_k) - Ric_{g_B}(X_i,X_k) + \frac{m}{f}Hess_{g_B}f(X_i,X_k), \ \forall i,k=1,\ldots n,$$
proving that $Hess_{\widetilde{g}}h(X_i,X_k)$ depends only on the base, $\forall i,k=1,\ldots n$. Thus, considering $j$ fixed in equation (\ref{teor2eq19}) we have
\begin{equation} \label{teor2eq20}
\frac{\partial}{\partial y_j}Hess_{\widetilde{g}}h(X_i,X_k) = 0, \ \ \ \forall i,k=1,\ldots n.
\end{equation}

On the other hand, $\forall i,k=1,\ldots n$ we have
$$\frac{\partial}{\partial y_j}Hess_{\widetilde{g}}h(X_i,X_k) = \frac{\partial}{\partial y_j}\left[Hess_{\widetilde{g}}f(X_i,X_k)\int e^{l(y_1,\ldots,y_m)}dy_j + Hess_{\widetilde{g}}m(X_i,X_k)\right].$$
Since $m=m(\hat{y_j})$, using the definition of the Hessian get that
\begin{equation*}
\frac{\partial}{\partial y_j}Hess_{\widetilde{g}}m(X_i,X_k)=0, \ \ \ \forall i,k=1,\ldots,n,
\end{equation*}
and as $Hess_{\widetilde{g}}f(X_i,X_k) = Hess_{g_B}f(X_i,X_k)$ follow that
\begin{equation} \label{teor2eq22}
\frac{\partial}{\partial y_j}Hess_{\widetilde{g}}h(X_i,X_k) = Hess_{g_B}f(X_i,X_k)e^{l(y_1,\ldots,y_m)}, \ \ \ \forall i,k=1,\ldots,n.
\end{equation}

Using (\ref{teor2eq20}) and (\ref{teor2eq22}) we obtain that
$$Hess_{g_B}f(X_i,X_k)e^{l(y_1,\ldots,y_m)}=0, \ \ \ \forall i,k=1,\ldots,n.$$
We have for hypothesis that there is at least one pair of vector $(X_i,X_k)$ of the base such that $Hess_{g_B}f(X_i,X_k) \neq 0$. Then
$$e^{l(y_1,\ldots,y_m)}=0,$$
but this is impossible. Therefore $h,_{y_j}=0$, $\forall j=1,\ldots,m$ e $\forall p\in M$. Consequently $h$ depends only on the base. This concludes the proof of the Theorem \ref*{teor1}.
\end{proof}

\vspace{.2in}

\begin{proof}[Proof of Corollary \ref{cor1}]
If $(M,\widetilde{g},h,\rho)$ is a quasi-Einstein manifold, $M$ satisfy
\begin{equation} \label{cor1eqquasiEinstein}
Ric_{\widetilde{g}} - \frac{r}{h}Hess_{\widetilde{g}}h=\rho \widetilde{g}, \ \ \ \ \ r\in\mathbb{R}^*_+, \ \ \ \ \ \rho\in\mathbb{R}.
\end{equation}
But, for $Y,Z\in\mathcal{L}(F)$, we have
$$\widetilde{g}(Y,Z)=f^2g_F(Y,Z)$$
and
$$Ric_{\widetilde{g}}(Y,Z)=Ric_{g_F}(Y,Z)−(f grad_{g_B}f + (m − 1)|grad_{g_B}f|^2)g_F(Y,Z)$$
(see for example \cite{O'neil}). Replacing $Ric_{\widetilde{g}}(Y,Z)$ in (\ref{cor1eqquasiEinstein}) we have
\begin{equation}\label{cor1eq15'}
Ric_{g_F}(Y,Z)= (\rho f^2 + f grad_{g_B}f + (m − 1)|grad_{g_B}f|^2)g_F(Y,Z)+\frac{r}{h}Hess_{\widetilde{g}}h(Y,Z).
\end{equation}

It follows from (\ref{cor1eq15'}) that F is Einstein if and only if 
$$Hess_{\widetilde{g}}h(Y,Z)=\lambda g_F.$$
Indeed, for $X\in\mathcal{L}(B)$ e $Y\in\mathcal{L}(F)$, as $Ric_{\widetilde{g}}(X,Y)=0$ and $\widetilde{g}(X,Y)=0$, by (\ref{cor1eqquasiEinstein}) we have to $h$ satisfies the equation (\ref{condHessh}). By Theorem \ref{teor1}, follow that $h$ depends only on the base, and thus $grad_{\widetilde{g}}h=grad_{g_B}h$. This shows us that the horizontal component of the field $grad_{\widetilde{g}}h$ is $grad_{g_B}h$ and the vertical component is null. Therefore, by warped product metric and by definition of Hessian tensor,
$$\widetilde{\nabla}_Y(grad_{\widetilde{g}}h)=\frac{grad_{g_B}h(f)}{f}Y$$
and
$$Hess_{\widetilde{g}}h(Y,Z)=\frac{grad_{g_B}h(f)}{f}\widetilde{g}(Y,Z)=fgrad_{g_B}h(f)g_F(Y,Z).$$
This concludes the proof of Corollary \ref{cor1}.
\end{proof}

\vspace{.2in}

\begin{proof}[Proof of Theorem \ref{teor2}:]

Assume initially that $m > 1$. It follows from \cite{O'neil} that if $X,Y\in{\cal
L}(\mathbb{R}^{n})$ and $V,W\in{\cal
L}(F)$, where ${\cal L}(\mathbb{R}^{n})$ and ${\cal L}(F)$ are respectively  the spaces of lifts of vector fields on $\mathbb{R}^{n}$ and $F$ to $\mathbb{R}^{n}\times F$, then
\begin{eqnarray}
\label{ric1}
\left\{\begin{array}{lcl}
Ric_{\widetilde{g}}(X,Y) &=& \displaystyle Ric_{\overline{g}}(X,Y) - \frac{m}{f}Hess_{\overline{g}}f(X,Y) \\
    Ric_{\widetilde{g}}(X,V) &=& 0 \\
    Ric_{\widetilde{g}}(V,W) &=& \displaystyle Ric_{g_{F}}(V,W) -
  \widetilde{g}(V,W)\left[\frac{\triangle_{\overline{g}}f}{f} + (m-1)\frac{\widetilde{g}(grad_{\overline{g}} f, grad_{\overline{g}}
  f)}{f^{2}}\right]
\end{array}
\right.
\end{eqnarray}

 It is well known (see, e.g., \cite{Be}) that if $\overline{g}=\frac{1}{\varphi^2}g$  , then
\[
Ric_{\overline{g}}=\frac{1}{\varphi^2}\left \{(n-2)\varphi
Hess_{g}\varphi+[\varphi \Delta_g
\varphi-(n-1)|grad_g\varphi|^2]g \right \}\,.
\]
Considering a parameterization $(x_1,\ldots,x_n,y_1,\ldots,y_n)$ of $M$, and denoting by $\displaystyle X_i=\frac{\partial}{\partial x_i}$, $\displaystyle Y_j=\frac{\partial}{\partial y_j}$, $i=1,\ldots,n$, $j=1,\ldots,m$, since $g(X_{i},X_{j}) = \varepsilon_{i}\delta_{ij}$, we have

\begin{eqnarray*}
  Ric_{\overline{g}}(X_{i}, X_{j}) &=& \frac{1}{\varphi}\left \{(n-2)
Hess_{g}\varphi(X_{i}, X_{j}) \right \}\ \forall\ i\neq j = 1,\ldots n,  \\
  Ric_{\overline{g}}(X_{i}, X_{i}) &=& \frac{1}{\varphi^2}\left \{(n-2)\varphi
Hess_{g}\varphi(X_{i}, X_{i})+[\varphi \Delta_g
\varphi-(n-1)|grad_g\varphi|^2]\varepsilon_{i} \right \}\ \forall\
i=1,\ldots n.
\end{eqnarray*}
 Since $Hess_{g}\varphi(X_{i}, X_{j}) = \varphi_{,x_{i}x_{j}}$
, $\Delta_g \varphi = \displaystyle\sum_{k=1}^{n}\varepsilon_{k}\varphi_{,x_{k}x_{k}}$ and
$|grad_g\varphi|^2 = \displaystyle\sum_{k =
1}^{n}\varepsilon_{k}\varphi_{,x_{k}}^{2} $, we have
\begin{equation}\label{ric2}
\left\{ \begin{array}{ccl}
  Ric_{\overline{g}}(X_{i}, X_{j}) &=& \displaystyle\frac{(n-2)\varphi_{,x_{i}x_{j}}}{\varphi}\qquad \forall\ i\neq j=1\ldots\ n, \\
  Ric_{\overline{g}}(X_{i}, X_{i}) &=& \displaystyle\frac{(n-2)\varphi_{,x_{i}x_{i}} +
  \varepsilon_{i}\displaystyle\sum_{k=1}^{n}\varepsilon_{k}\varphi_{,x_{k}x_{k}}}{\varphi}
  - (n - 1)\varepsilon_{i}\sum_{k=1}^{n}\frac{\varepsilon_{k}\varphi_{,x_{k}}^{2}}{\varphi^{2}} \ \forall i=1,\ldots,n.
\end{array}
\right.
\end{equation}

Recall that
\[
Hess_{\overline{g}}f(X_{i},X_{j})=f_{ ,x_ix_j}-\sum_k
\overline{\Gamma}_{ij}^k f_{,x_k},
\]
where $\overline{\Gamma}_{ij}^k$ are the Christoffel symbols of the metric $\overline{g}$. For $i,\ j,\ k$ distinct, we have
\[
\overline{\Gamma}_{ij}^k= 0\ \ \ \ \ \ \ \ \ \overline{\Gamma}_{ij}^i= -\frac{\varphi_{,x_{j}}}{\varphi}\ \ \ \ \ \ \ \overline{\Gamma}_{ii}^k= \varepsilon_{i}\varepsilon_{k}\frac{\varphi_{,x_{k}}}{\varphi}\ \ \ \ \ \ \ \overline{\Gamma}_{ii}^i= -\frac{\varphi_{,x_{j}}}{\varphi} .\]
Therefore,

\begin{equation}\label{hes1}
\left\{
\begin{array}{ccc}
Hess_{\overline{g}}f(X_{i},X_{j}) &=& \displaystyle
f_{,x_ix_j}+\frac{\varphi_{,x_j}}{\varphi}f_{,x_i}
+\frac{\varphi_{,x_i}}{\varphi}f_{,x_j},\ \forall\ i\neq j = 1\ldots n,\\
Hess_{\overline{g}}f(X_{i},X_{i}) &=& \displaystyle
f_{,x_ix_i}+2\frac{\varphi_{,x_i}}{\varphi}f_{,x_i} -
\varepsilon_{i}\sum_{k=1}^{n}\varepsilon_{k}\frac{\varphi_{,x_k}}{\varphi}f_{,x_k}, \ \forall\ i = 1\ldots n.
\end{array}
\right.
\end{equation}

 Substituting (\ref{ric2}) and (\ref{hes1}) into the first equation of (\ref{ric1}) we  obtain
 \begin{equation}\label{ric3}
  Ric_{\widetilde{g}}(X_{i},X_{j}) =
 \frac{(n-2)\varphi_{,x_{i}x_{j}}}{\varphi}-\frac{m}{f}\left[f_{,x_ix_j}+\frac{\varphi_{,x_j}}{\varphi}f_{,x_i}
+\frac{\varphi_{,x_i}}{\varphi}f_{,x_j}\right],\ \forall\ i\neq j
\end{equation}

and

\begin{eqnarray}\label{ric4}
  Ric_{\widetilde{g}}(X_{i},X_{i}) &=& \frac{(n-2)\varphi_{,x_{i}x_{i}} +
  \varepsilon_{i}\displaystyle\sum_{k=1}^{n}\varepsilon_{k}\varphi_{,x_{k}x_{k}}}{\varphi}
  - (n -
  1)\varepsilon_{i}\sum_{k=1}^{n}\frac{\varepsilon_{k}\varphi_{,x_{k}}^{2}}{\varphi^{2}} \nonumber\\
  &-&  \frac{m}{f}\left[f_{,x_ix_i}+2\frac{\varphi_{,x_i}}{\varphi}f_{,x_i} -
\varepsilon_{i}\sum_{k=1}^{n}\varepsilon_{k}\frac{\varphi_{,x_k}}{\varphi}f_{,x_k}\right].
\end{eqnarray}

On the other hand,
\begin{equation}\label{gr}
\left\{
\begin{array}{ccl}
  Ric_{{g_{F}}}(Y_{i}, Y_{j}) &=& \lambda_{F}g_{F}(Y_{i}, Y_{j}) \\
  \widetilde{g}(Y_{i}, Y_{j}) &=& f^{2}g_{F}(Y_{i}, Y_{j}) \\
  \Delta_{\overline{g}}f &=& \varphi^{2}\sum_{k = 1}^{n}\varepsilon_{k}f_{,x_{k}x_{k}} - (n-2)\varphi\sum_{k = 1}^{n}\varepsilon_{k}\varphi_{,x_k}f_{,x_k} \\
  \widetilde{g}(grad_{\overline{g}}f, grad_{\overline{g}} f) &=& \varphi^{2}\sum_{k =
  1}^{n}\varepsilon_{k}f_{,x_{k}}^{2}
\end{array}
\right.,
\end{equation}

$\forall i,j=1,\ldots,m$. Substituting (\ref{gr}) in the third equation of (\ref{ric1}),
we have

\begin{equation}\label{ric5}
 Ric_{\widetilde{g}}(Y_{i},Y_{j}) = \gamma_{ij}g_{F}(Y_{i},Y_{j})
\end{equation}
where \[\gamma_{ij } =\lambda_{F} -
f\varphi^{2}\sum_{k = 1}^{n}\varepsilon_{k}f_{,x_{k}x_{k}} +
(n-2)f\varphi\sum_{k = 1}^{n}\varepsilon_{k}
\varphi_{,x_k}f_{,x_k} - (m - 1)\varphi^{2}\sum_{k =
1}^{n}\varepsilon_{k}f_{,x_{k}}^{2}  .\]

On the other hand, since $(M,\widetilde{g},h)$ is quasi-Einstein, we have

\begin{equation}\label{quasiEinstein}
Ric_{\widetilde{g}}-\frac{r}{h}Hess_{\widetilde{g}}h=\rho \widetilde{g},
\end{equation}

and as $h:\mathbb{R}^{n}\rightarrow \mathbb{R}$, we have 

\[Hess_{\widetilde{g}}h(X_{i}, X_{j}) = Hess_{\overline{g}}h(X_{i}, X_{j}), \forall 1\leq i, j\leq n,\]
i.e.,
\begin{equation}\label{hess3}
\left\{
\begin{array}{ccc}
Hess_{\widetilde{g}}h(X_{i},X_{j}) &=& \displaystyle
h_{,x_ix_j}+\frac{\varphi_{,x_j}}{\varphi}h_{,x_i}
+\frac{\varphi_{,x_i}}{\varphi}h_{,x_j},\ \forall\ i\neq j = 1\ldots n,\\
Hess_{\widetilde{g}}h(X_{i},X_{i}) &=& \displaystyle
h_{,x_ix_i}+2\frac{\varphi_{,x_i}}{\varphi}h_{,x_i} -
\varepsilon_{i}\sum_{k=1}^{n}\varepsilon_{k}\frac{\varphi_{,x_k}}{\varphi}h_{,x_k}, \ \forall i=1,\ldots,n.
\end{array}
\right.
\end{equation}

By substituting (\ref{ric3}) and the first equation of (\ref{hess3}) into (\ref{quasiEinstein}), we obtain (\ref{eqphij}). Again using (\ref{ric4}) and the second 
equation of (\ref{hess3}) in (\ref{quasiEinstein}) we get (\ref{eqphii}).
Now for $X_{i}\in{\cal L}(\mathbb{R}^{n})$ and $Y_{j}\in{\cal L}(F)$ ($1\leq i \leq n$ and $1\leq j\leq m$) we get
\[Hess_{\widetilde{g}}h(X_{i}, Y_{j}) = 0.\] In this case  equation (\ref{quasiEinstein}) is trivially satisfied.

Taking $Y_{i}, Y_{j}\in{\cal L}(F)$ with $1\leq i,j\leq m$ and using the last equation of (\ref{ric1}) and the equation (\ref{quasiEinstein}), we have
$$Ric_{g_F}(Y_i,Y_j)=[\rho f^2 + f\triangle_{\overline{g}}f + (m-1)|grad_{\overline{g}}f|^2]g_F(Y_i,Y_j)+\frac{r}{h}Hess_{\widetilde{g}}h(Y_i,Y_j).$$
Being $F$ Einstein, we should have
$$\frac{r}{h}Hess_{\widetilde{g}}h(Y_i,Y_j)=\mu g_{F}(Y_i,Y_j).$$
But
$$\frac{r}{h}Hess_{\widetilde{g}}h(Y_i,Y_j)=\frac{r}{h}\widetilde{g}(\nabla_{Y_i}(grad_{\widetilde{g}}h),Y_j) = \frac{r}{h}\frac{(grad_{\overline{g}}h)(f)}{f}\widetilde{g}(Y_i,Y_j)$$
\begin{equation}\label{hess4}
\Longleftrightarrow Hess_{\widetilde{g}}h(Y_i,Y_j)=f grad_{\overline{g}}h(f)g_F(Y_i,Y_j) = f\varphi^2\sum\limits_{k=1}^{n}\veps_kf,_{x_k}h,_{x_k}g_F(Y_i,Y_j).
\end{equation}

By substituting (\ref{ric5}) and (\ref{hess4}) into (\ref{quasiEinstein}) we obtain (\ref{eqphll}).

The converse of this theorem can be easily verified.

In the case  $m = 1$ just note that, being $V\in\mathcal{L}(F)$, we have

\begin{eqnarray*}
  Ric_{\widetilde{g}}(X_{i},X_{j}) &=& Ric_{\overline{g}}(X_{i},X_{j}) - \frac{1}{f}Hess_{\overline{g}}f(X_{i},X_{j}),\ \forall\  i,\ j = 1,\ldots n \\
  Ric_{\widetilde{g}}(X_{i},V) &=& 0,\ \forall\ i= 1,\ldots n\\
  Ric_{\widetilde{g}}(V, V) &=& - \widetilde{g}(V, V)\frac{\triangle_{\overline{g}}f}{f}.
\end{eqnarray*}
In this case the equations (\ref{eqphij}) and (\ref{eqphii}) remain the same and  equation (\ref{eqphll}) reduces to

$$\sum\limits_{k=1}^{n}\veps_k[-h\varphi^2f,_{x_kx_k}+(n-2)h\varphi\varphi,_{x_k}f,_{x_k}-r\varphi^2f,_{x_k}h,_{x_k}]=\rho f.$$

This concludes the proof of Theorem \ref{teor2}.
\end{proof}

\vspace{.2in}

\begin{proof}[Proof of Theorem \ref{teor3}:]

We are assuming that $f=f(\xi)$, $\varphi=\varphi(\xi)$ and $h=h(\xi)$, where $\xi=\sum\limits_{k=1}^{n}\alpha_kx_k$, with $\alpha_i\in\mathbb{R}$ and $\sum\limits_{k=1}^{n}\varepsilon_k\alpha_k^2=\varepsilon_{i_0}$ or $\sum\limits_{k=1}^{n}\varepsilon_k\alpha_k^2=0$.
So we have to
$$\begin{array}{cc}
f,_{x_i}=f'\alpha_i, & f,_{x_ix_j}=f''\alpha_i\alpha_j,\\
\varphi,_{x_i}=\varphi'\alpha_i, & \varphi,_{x_ix_j}=\varphi''\alpha_i\alpha_j,\\
h,_{x_i}=h'\alpha_i, & h,_{x_ix_j}=h''\alpha_i\alpha_j.
\end{array}$$

Substituting in equation (\ref{eqphij}), we have
$$(n-2)fh\varphi''\alpha_i\alpha_j-rf\varphi h''\alpha_i\alpha_j-mh\varphi f''\alpha_i\alpha_j-2mh\varphi'f'\alpha_i\alpha_j-2rf\varphi'h'\alpha_i\alpha_j=0, \ \ \ \forall i\neq j.$$
If there is any $i\neq j$ such that $\alpha_i\alpha_j\neq 0$, then this equation becomes
\begin{equation} \label{teor3eqI'}
(n-2)fh\varphi''-rf\varphi h''-mh\varphi f''-2mh\varphi'f'-2rf\varphi'h'=0.
\end{equation}
In the same way, considering equation (\ref{eqphii}), we have
$$\begin{array}{c}
\varphi[(n-2)fh\varphi''\alpha_i^2-rf\varphi h''\alpha_i^2-mh\varphi f''\alpha_i^2-2mh\varphi'f'\alpha_i^2-2rf\varphi'h'\alpha_i^2]\\
+\varepsilon_i\sum\limits_{k=1}^{n}\varepsilon_k[fh\varphi\varphi''\alpha_k^2-(n-1)fh(\varphi')^2\alpha_k^2+mh\varphi\varphi'f'\alpha_k^2+rf\varphi\varphi'h'\alpha_k^2]=\varepsilon_i\rho fh.
\end{array}$$
Using equation (\ref{teor3eqI'}), we get
\begin{equation} \label{teor3eqII'}
\sum\limits_{k=1}^{n}\varepsilon_k\alpha_k^2[fh\varphi\varphi''-(n-1)fh(\varphi')^2+mh\varphi\varphi'f'+rf\varphi\varphi'h']=\rho fh.
\end{equation}
Analogously, the equation (\ref{eqphll}) reduces to
\begin{equation} \label{teor3eqIII'}
\sum\limits_{k=1}^{n}\varepsilon_k\alpha_k^2[-fh\varphi^2f''+(n-2)fh\varphi\varphi'f'-(m-1)h\varphi^2(f')^2-rf\varphi^2f'h']=h[\rho f^2-\lambda_F].
\end{equation}

Thus, if we consider the case where $\sum\limits_{k=1}^{n}\varepsilon_k\alpha_k^2=\varepsilon_{i_0}$, worth the system
\begin{equation} \label{teor3sistema*}
\begin{cases}
(n-2)fh\varphi''-rf\varphi h''-mh\varphi f''-2mh\varphi'f'-2rf\varphi'h'=0\\
\sum\limits_{k=1}^{n}\varepsilon_k\alpha_k^2[fh\varphi\varphi''-(n-1)fh(\varphi')^2+mh\varphi\varphi'f'+rf\varphi\varphi'h']=\rho fh\\
\sum\limits_{k=1}^{n}\varepsilon_k\alpha_k^2[-fh\varphi^2f''+(n-2)fh\varphi\varphi'f'-(m-1)h\varphi^2(f')^2-rf\varphi^2f'h']=h[\rho f^2-\lambda_F]
\end{cases}
\end{equation}

But, if $\sum\limits_{k=1}^{n}\varepsilon_k\alpha_k^2=0$, we have by (\ref{teor3eqII'}) to $\rho=0$, and then for (\ref{teor3eqIII'}) follow that $\lambda_F=0$. Therefore, in this case we obtain
\begin{equation} \label{teor3sistema**}
\begin{cases}
(n-2)fh\varphi''-rf\varphi h''-mh\varphi f''-2mh\varphi'f'-2rf\varphi'h'=0\\
\rho=\lambda_F=0
\end{cases}
\end{equation}

Now, if $\alpha_i\alpha_j=0$ for all $i\neq j$, then $\xi=x_{i_0}$. Therefore, the equation (\ref{eqphij}) is trivially satisfied. For the other equations, we will think of two cases:
\begin{itemize}
\item $i\neq i_0$\\
In this case, as $\alpha_i=0$ for all $i\neq i_0$, the equation (\ref{eqphii}) becomes
$$\varepsilon_{i_0}\alpha_{i_0}^2[fh\varphi\varphi''-(n-1)fh(\varphi')^2+mh\varphi\varphi'f'+rf\varphi\varphi'h']=\rho fh,$$
and the equation (\ref{eqphll}) stay
$$\varepsilon_{i_0}\alpha_{i_0}^2[-fh\varphi^2f''+(n-2)fh\varphi\varphi'f'-(m-1)h\varphi^2(f')^2-rf\varphi^2f'h']=h[\rho f^2-\lambda_F].$$

\item $i=i_0$\\
In this case, the equation (\ref{eqphii}) provides
$$\begin{array}{c}
\alpha_{i_0}^2\varphi[(n-2)fh\varphi''-rf\varphi h''-mh\varphi f''-2mh\varphi'f'-2rf\varphi'h']\\
+ \varepsilon_{i_0}^2\alpha_{i_0}^2[fh\varphi\varphi''-(n-1)fh(\varphi')^2+mh\varphi\varphi'f'+rf\varphi\varphi'h']=\varepsilon_{i_0}\rho fh.
\end{array}$$
But for the previous case, we must
$$\varepsilon_{i_0}^2\alpha_{i_0}^2[fh\varphi\varphi''-(n-1)fh(\varphi')^2+mh\varphi\varphi'f'+rf\varphi\varphi'h']=\varepsilon_{i_0}\rho fh.$$
Therefore, we obtain the first equation of (\ref{teor3sistemavetorNAOnulo}). The equation (\ref{eqphll}) continues being
$$\varepsilon_{i_0}\alpha_{i_0}^2[-fh\varphi^2f''+(n-2)fh\varphi\varphi'f'-(m-1)h\varphi^2(f')^2-rf\varphi^2f'h']=h[\rho f^2-\lambda_F].$$
\end{itemize}
Then we conclude that the system (\ref{teor3sistema*}) prevails. This concludes the proof of Theorem \ref{teor3}.
\end{proof}

\vspace{.2in}

In order to prove Theorems \ref{teor4} and \ref{teor5}, we consider functions $f(\xi)$, $h(\xi)$ and $\varphi(\xi)$, where $\xi=\sum\limits_{i=1}^{n}\alpha_ix_i$, $\alpha_i\in\mathbb{R}$, $\sum\limits_{i=1}^{n}\varepsilon_i\alpha_i^2=\pm 1$. It follows from Theorem \ref{teor3} that $(M=\mathbb{R}^n\times_fF,\widetilde{g} = \overline{g} + f^{2}g_{F},h)$, $\displaystyle \overline{g}=\frac{1}{\varphi^2}g$, is a quasi-Einstein manifold if, only if, the functions $f$, $\varphi$ and $h$ satisty
$$\begin{cases}
(n-2)fh\varphi''-rf\varphi h''-mh\varphi f''-2mh\varphi'f'-2rf\varphi'h'=0\\
fh\varphi\varphi''-(n-1)fh(\varphi')^2+mh\varphi\varphi'f'+rf\varphi\varphi'h'=0\\
-fh\varphi^2f''+(n-2)fh\varphi\varphi'f'-(m-1)h\varphi^2(f')^2-rf\varphi^2f'h'=0
\end{cases},$$

i.e.,

\begin{equation} \label{sistema**}
\begin{cases}
(n-2)\frac{\varphi''}{\varphi}-r\frac{h''}{h}-m\frac{f''}{f}-2m\frac{\varphi'}{\varphi}\frac{f'}{f}-2r\frac{\varphi'}{\varphi}\frac{h'}{h}=0\\
\frac{\varphi''}{\varphi}-(n-1)\left(\frac{\varphi'}{\varphi}\right)^2+m\frac{\varphi'}{\varphi}\frac{f'}{f}+r\frac{\varphi'}{\varphi}\frac{h'}{h}=0\\
-\frac{f''}{f}+(n-2)\frac{\varphi'}{\varphi}\frac{f'}{f}-(m-1)\left(\frac{f'}{f}\right)^2-r\frac{f'}{f}\frac{h'}{h}=0
\end{cases}.
\end{equation}

Note that
$$\frac{\varphi''}{\varphi}=\frac{\varphi\varphi''}{\varphi^2}-\left(\frac{\varphi'}{\varphi}\right)^2+\left(\frac{\varphi'}{\varphi}\right)^2=\left(\frac{\varphi'}{\varphi}\right)'+\left(\frac{\varphi'}{\varphi}\right)^2.$$

Thus, the system (\ref{sistema**}) is equivalent to
$$\begin{cases}
(n-2)\left(\frac{\varphi'}{\varphi}\right)'+(n-2)\left(\frac{\varphi'}{\varphi}\right)^2-r\left(\frac{h'}{h}\right)'-r\left(\frac{h'}{h}\right)^2-m\left(\frac{f'}{f}\right)'-m\left(\frac{f'}{f}\right)^2-2m\frac{\varphi'}{\varphi}\frac{f'}{f}-2r\frac{\varphi'}{\varphi}\frac{h'}{h}=0\\
\left(\frac{\varphi'}{\varphi}\right)'-(n-2)\left(\frac{\varphi'}{\varphi}\right)^2+m\frac{\varphi'}{\varphi}\frac{f'}{f}+r\frac{\varphi'}{\varphi}\frac{h'}{h}=0\\
-\left(\frac{f'}{f}\right)'-m\left(\frac{f'}{f}\right)^2+(n-2)\frac{\varphi'}{\varphi}\frac{f'}{f}-r\frac{f'}{f}\frac{h'}{h}=0
\end{cases}.$$

It follows from the second equation
$$\frac{h'}{h}=-\frac{1}{r}\frac{\left(\frac{\varphi'}{\varphi}\right)'}{\left(\frac{\varphi'}{\varphi}\right)}+\frac{(n-2)}{r}\frac{\varphi'}{\varphi}-\frac{m}{r}\frac{f'}{f}.$$

On the other hand, by the third equation we have to
$$\frac{h'}{h}=-\frac{1}{r}\frac{\left(\frac{f'}{f}\right)'}{\left(\frac{f'}{f}\right)}+\frac{(n-2)}{r}\frac{\varphi'}{\varphi}-\frac{m}{r}\frac{f'}{f}.$$

Therefore,
$$\frac{\left(\frac{\varphi'}{\varphi}\right)'}{\left(\frac{\varphi'}{\varphi}\right)}=\frac{\left(\frac{f'}{f}\right)'}{\left(\frac{f'}{f}\right)}.$$

Integrating, we obtain
$$\frac{\varphi'}{\varphi}=k\frac{f'}{f}, \ \ \ \ \ k\in\mathbb{R}^*_+.$$

Substituting into the system, the second and third equations become the same, and then we get
$$\begin{cases}
[(n-2)k-m]\left(\frac{f'}{f}\right)'-r\left(\frac{h'}{h}\right)'-[m(2k+1)-(n-2)k^2]\left(\frac{f'}{f}\right)^2-2rk\frac{f'}{f}\frac{h'}{h}-r\left(\frac{h'}{h}\right)^2=0\\
\left(\frac{f'}{f}\right)'-[(n-2)k-m]\left(\frac{f'}{f}\right)^2+r\frac{f'}{f}\frac{h'}{h}=0
\end{cases}.$$

Denoting by $a=(n-2)k-m$, $b=m(2k+1)-(n-2)k^2$, $x(\xi)=\frac{f'}{f}(\xi)$ and $y(\xi)=\frac{h'}{h}(\xi)$, the system becomes
$$\begin{cases}
ax'-ry'-bx^2-ry^2-2rkxy-ry^2=0\\
x'-ax^2+rxy=0
\end{cases},$$
this is,
\begin{equation} \label{sistemaI}
\begin{cases}
y'=\frac{(a^2-b)}{r}x^2-y^2-(2k+a)xy\\
x'=ax^2-rxy
\end{cases}
\end{equation}

It follows from \ref{sistemaI} that
$$(ax^2-rxy)dy+\left[\left(\frac{b-a^2}{r}\right)x^2+y^2+(2k+a)xy\right]dx=0,$$
where we note that the functions that accompany the terms $dx$ and $dy$ are homogeneous of degree $2$. In order to study this equation (see e.g. \cite{Davis} p. 37), we take
\begin{equation} \label{y=xz}
y(\xi) = x(\xi)z(\xi),
\end{equation}

where z may be a nonzero constant or a nonconstant function. The Theorems \ref{teor4} and \ref{teor5} are obtained by letting z be a nonzero constant when $r\neq 1$ e $r=1$, respectively. Already in Theorem 6 we consider the case where $z$ is a non-constant smooth function.

\vspace{.2in}

\begin{proof}[Proof of Theorem \ref{teor4}:]

We consider solutions of the system as in (\ref{y=xz}), where $z(\xi) = N$ and $N$ is a nonzero constant; i.e., $y(\xi) = Nx(\xi)$. By substituting $y$ in the first and second equations of (\ref{sistemaI}), we get
\begin{equation} \label{sistemaII}
\begin{cases}
x'=\left[\frac{(a^2-b)-N^2r-(2k+a)Nr}{Nr}\right]x^2\\
x'=[a-Nr]x^2
\end{cases}.
\end{equation}

Comparing the two expressions we conclude that $N$ must satisfy
$$r(r-1)N^2-2r(k+a)N+a^2-b=0.$$
Therefore, we get two values of N, given by
\begin{equation} \label{teor4eq42}
N_\pm=\frac{r(k+a)\pm\sqrt{r^2(k+a)^2-r(r-1)(a^2-b)}}{r(r-1)}, \ \ \ \ \ r(k+a)^2\geq (r-1)(a^2-b).
\end{equation}

Going back to the second equation, we have
$$\frac{x'}{x^2}=a-rN_\pm.$$
Consequently,
$$x_\pm(\xi)=-\frac{1}{(a-rN_\pm)\xi+c} \ \ \ \ \ and \ \ \ \ \ y_\pm(\xi)=-\frac{N_\pm}{(a-rN_\pm)\xi+c},$$
where $c\in\mathbb{R}$. Remembering that
$$\frac{\varphi'}{\varphi}=k\frac{f'}{f}, \ \ \ \ \ x(\xi)=\frac{f'}{f}(\xi) \ \ \ \ \ and \ \ \ \ \ y(\xi)=\frac{h'}{h}(\xi),$$
we get $f$, $h$ and $\varphi$ given by \ref{teor4caradasfuncoes}. This concludes the proof of Theorem \ref{teor4}.
\end{proof}

\vspace{.2in}

\begin{proof}[Proof of Theorem \ref{teor5}:]

We consider solutions of the system (\ref{sistemaI}) to $r = 1$ as $y(\xi) = Nx(\xi)$. By substituting $y$ in the first and second equations of (\ref{sistemaI}), we get
\begin{equation} \label{sistemaII'}
\begin{cases}
x'=\left[\frac{(a^2-b)-N^2-(2k+a)N}{N}\right]x^2\\
x'=[a-N]x^2
\end{cases}.
\end{equation}

Comparing the two expressions we conclude that $N$ is given by
\begin{equation} \label{teor5eq42}
N=\frac{a^2-b}{2(k+a)}, \ \ \ \ \ k+a\neq 0.
\end{equation}

Going back to the second equation, we have
$$\frac{x'}{x^2}=a-N.$$
Consequently,
$$x(\xi)=-\frac{1}{(a-N)\xi+c} \ \ \ \ \ and \ \ \ \ \ y(\xi)=-\frac{N}{(a-N)\xi+c},$$
where $c\in\mathbb{R}$. Remembering that
$$\frac{\varphi'}{\varphi}=k\frac{f'}{f}, \ \ \ \ \ x(\xi)=\frac{f'}{f}(\xi) \ \ \ \ \ and \ \ \ \ \ y(\xi)=\frac{h'}{h}(\xi),$$
we get $f$, $h$ and $\varphi$ given by \ref{teor5caradasfuncoes}.

This concludes the proof of Theorem \ref{teor5}.
\end{proof}

\vspace{.2in}

\begin{proof}[Proof of Theorem \ref{teortudoimplicito}:]

We consider solutions of the system (\ref{sistemaI}) as in (\ref*{y=xz}), where $z(\xi)$ is a non-constant smooth function. By substituting $y(\xi)=x(\xi)z(\xi)$ in the first and second equations of (\ref{sistemaI}), we get
\begin{equation} \label{teortudoimplicitosistema}
\begin{cases}
z'=\frac{a^2-b}{r}x-2(k+a)xz+(r-1)xz^2\\
x'=(a-rz)x^2
\end{cases}.
\end{equation}

Dividing the second equation for the first one, we obtain
$$\frac{dx}{x}=\frac{r(a-rz)}{r(r-1)z^2-2r(k+a)z+a^2-b}dz,$$
which shows us that
$$|x|=c_1e^{\int\frac{r(a-rz)}{r(r-1)z^2-2r(k+a)z+a^2-b}dz}, \ \ \ \ \ c_1>0.$$

Denoting by $\displaystyle v(z)=\frac{r(a-rz)}{r(r-1)z^2-2r(k+a)z+a^2-b}$ and returning to the second equation of the system (\ref{teortudoimplicitosistema}), we see that if
$$x=ce^{\int v(z)dz}, \ \ \ \ \ c\in\mathbb{R},$$
then $z$ satisfies
$$v(z)z'-c(a-rz)e^{\int v(z)dz}=0.$$
This concludes the proof of Theorem \ref{teortudoimplicito}.
\end{proof}

\vspace{.2in}

\begin{proof}[Proof of Theorem \ref{teorseesomentese}:]
When $\rho=0$, by introducing the auxiliary functions $\displaystyle x(\xi)=\frac{f'}{f}(\xi)$ and $\displaystyle y(\xi)=\frac{h'}{h}(\xi)$, we have seen that (\ref{teor3sistemavetorNAOnulo}) is equivalent to the system (\ref{sistemaI}) for $x$ and $y$. The solutions of this system can be written as $y(\xi)=x(\xi)z(\xi)$, where $z(\xi)$ is a nonzero function.
If $z(\xi)$ is a nonzero constant, then the proof of Theorems \ref{teor4} and \ref{teor5} shows that the solutions of (\ref{sistemaI}) are given by (\ref{teor4caradasfuncoes}) and (\ref{teor5caradasfuncoes}), respectively. If the function $z(\xi)$ is not constant, then the proof of Theorem \ref{teortudoimplicito} shows that $x$ and $z$ are determined by (\ref{teortudoimplicitoeqs1}) or (\ref{teortudoimplicitoeqs2}) and the functions $f$, $\varphi$ and $h$ are obtained by integrating the ordinary differential equations given by (\ref{teortudoimplicitofhvarphi}). This completes the proof of Theorem \ref{teorseesomentese}.
\end{proof}

\vspace{.2in}

\begin{proof}[Proof of Theorem \ref{teor6}:]
Let $(\R^n,g)$ be a pseudo-Euclidean space, $n\geq 3$ with coordinates $x=(x_1,\cdots, x_n)$ and $g_{ij}=\delta_{ij}\veps_i$. Consider a warped product $M=(\mathbb{R}^n,\overline{g})\times_fF^m$ with metric $\widetilde{g}=\overline{g}\oplus f^2g_F$, where $\overline{g}=\frac{1}{\varphi^2}g$ and F is a Ricci-flat semi-Riemannian manifold. Let $f(\xi)$ and $\varphi(\xi)$ be any positive differentiable functions invariant under the translation of $(n−1)$-dimensional translation group, whose basic invariant is $\xi=\sum\limits_{i=1}^{n}\alpha_ix_i$, with $\alpha_i\in\mathbb{R}$ and $\sum\limits_{i=1}^{n}\varepsilon_i\alpha_i^2=0$. Then it follows from Theorem \ref{teor3} that $(M,\widetilde{g},h)$ is a quasi-Einstein if, and only if, $\rho=\lambda_F=0$ and and $h$ satisfies the linear ordinary differential equation (\ref{teor6equacao}) determined by $f$ and $\varphi$.
\end{proof}

\vspace{.2in}

In order to prove Corollary \ref{cor2}, we consider functions $f(\xi)$, $h(\xi)$ and $\varphi(\xi)$. For the work of Dong-Soo Kim and Young Ho Kim (cf. \cite{Kim-Kim}), if a manifold $(B,g_B,h)$ is quasi-Einstein, this is,
$$Ric_{g_B} - \frac{r}{h}Hess_{g_B}h = \rho g_B, \ \ \ \ \ r\in\mathbb{Z}^*_+, \ \ \ \ \ \rho\in\mathbb{R},$$
then the function $h$ satisfies the equation
\begin{equation} \label{eq3doKimKim}
h\Delta_{g_B}h + (r-1)|grad_{g_B}h|^2 + \rho h^2 = \mu,
\end{equation}
for some constant $\mu\in\mathbb{R}$. Thus, choosing a manifold $F_2$ of dimension $r$ and with Ricci curvature $\mu$, we built a Einstein manifold $B\times_hF_2$.

Let's compute the expression on the left side of the equation (\ref{eq3doKimKim}) in the warped product metric $g_B$. Recalling that in $B=(\mathbb{R}^{n}\times _{f}F_1^{m},g_B)$ the functions $f$, $h$ and $\varphi$ depend only on $(\mathbb{R}^n,\overline{g}=\frac{1}{\varphi^2}g)$, and
$$h,_{x_i}=\alpha_ih', \ \ \ \ \ h,_{x_ix_i}=\alpha_i^2h'' \ \ \ \ e \ \ \ \ \sum\limits_{i=1}^{n}\alpha_i^2\varepsilon_i=\varepsilon_{i_0},$$
and analogously to $f$ and $\varphi$, we have
$$grad_{g_B}f=grad_{\overline{g}}f = \sum\limits_{i=1}^{n}\varphi^2\varepsilon_i\alpha_if'\frac{\partial}{\partial x_i}, \ \ \ \ \ grad_{g_B}h= \sum\limits_{i=1}^{n}\varphi^2\varepsilon_i\alpha_ih'\frac{\partial}{\partial x_i},$$
and
$$\Delta_{\overline{g}}h = \sum\limits_{i,j=1}^{n}\overline{g}^{ij}\left[\frac{\partial^2h}{\partial x_i\partial x_j}-\sum\limits_{k=1}^{n}\overline{\Gamma}_{ij}^k\frac{\partial h}{\partial x_k}\right]
=\varphi^2\varepsilon_{i_0}\left[h''-(n-2)\frac{\varphi'h'}{\varphi}\right].$$

Therefore,
\begin{equation} \label{laplacianodeh}
\begin{array}{rl}
\Delta_{g_B}h & =\Delta_{\overline{g}}h + m\frac{\overline{g}(grad_{\overline{g}}h,grad_{\overline{g}}f)}{f}\\
 & = \varphi^2\varepsilon_{i_0}\left[h''-(n-2)\frac{\varphi'h'}{\varphi}\right] + m\varepsilon_{i_0}\varphi^2\frac{h'f'}{f},
\end{array}
\end{equation}
and
\begin{equation} \label{normadogradientedeh}
|grad_{g_B}h|^2=|grad_{\overline{g}}h|^2=\varepsilon_{i_0}\varphi^2(h')^2.
\end{equation}

Then, in our case, the left side of (\ref{eq3doKimKim}) becomes
\begin{equation} \label{eq3doKimKimnonossocaso}
\begin{array}{l}
h\Delta_{g_B}h + (r-1)|grad_{g_B}h|^2 + \rho h^2\\
= h\Big\{\varphi^2\varepsilon_{i_0}\left[h''-(n-2)\frac{\varphi'h'}{\varphi}\right] + m\varepsilon_{i_0}\varphi^2\frac{h'f'}{f}\Big\} + (r-1)\varepsilon_{i_0}\varphi^2(h')^2 + \rho h^2.
\end{array}
\end{equation}

\vspace{.2in}

\begin{proof}[Proof of Corollary \ref{cor2}:]
When the vector $\alpha=\sum_{i=1}^{n}\varepsilon_i\alpha_i^2$ is non null, we divide the proof in two cases:

\begin{itemize}
\item[i)] $r\neq 1$\\
Consider $\rho=0$, $r\neq 1$ and the functions $f$, $h$ and $\varphi$ given by (\ref{teor4caradasfuncoes}). By \ref{eq3doKimKimnonossocaso} we have
$$h\Delta_{g_B}h + (r-1)|grad_{g_B}h|^2 + \rho h^2$$
$$= h_\pm\Big\{\varphi_\pm^2\varepsilon_{i_0}\left[h_\pm''-(n-2)\frac{\varphi_\pm'h_\pm'}{\varphi_\pm}\right] + m\varepsilon_{i_0}\varphi_\pm^2\frac{h_\pm'f_\pm'}{f_\pm}\Big\} + (r-1)\varepsilon_{i_0}\varphi_\pm^2(h_\pm')^2$$
$$=\varepsilon_{i_0}c_2^2c_3^2N_\pm[N_\pm(1-r)+a-(n-2)k+m-(1-r)N_\pm][(a-rN_\pm)\xi+c]^{\frac{2N_\pm(r-1)-2a-2k}{a-rN_\pm}}=0,$$
because $a=(n-2)k-m$. Then, by the result in \cite{Kim-Kim}, if we consider a Einstein manifold $F_2$ of dimension $r$ and with Ricci curvature $\mu=0$, the manifold $(\mathbb{R}^n\times_fF_1^m)\times_hF_2^r$ with metric $\widetilde{g}=(\overline{g}\oplus f^2g_{F_1})\oplus h^2g_{F_2}$ is a Ricci-flat Einstein manifold.

\item[ii)] $r=1$\\
Consider $\rho=0$, $r=1$ and the functions $f$, $h$ and $\varphi$ given by (\ref{teor5caradasfuncoes}). By \ref{eq3doKimKimnonossocaso} we have
$$h\Delta_{g_B}h + (r-1)|grad_{g_B}h|^2 + \rho h^2 = h\Big\{\varphi^2\varepsilon_{i_0}\left[h''-(n-2)\frac{\varphi'h'}{\varphi}\right] + m\varepsilon_{i_0}\varphi^2\frac{h'f'}{f}\Big\}$$
$$=\varepsilon_{i_0}c_2^2c_3^2N[a-(n-2)k+m][(a-N)\xi+c]^{-\frac{2(k+a)}{a-N}}=0,$$
because $a=(n-2)k-m$. By the result in \cite{Kim-Kim}, the manifold $(\mathbb{R}^n\times_fF_1^m)\times_h\mathbb{R}$ with metric $\widetilde{g}=(\overline{g}\oplus f^2g_{F_1})\oplus h^2g_{\mathbb{R}}$ is a Ricci-flat Einstein manifold.
\end{itemize}

Now, when the vector $\alpha=\sum_{i=1}^{n}\varepsilon_i\alpha_i^2$ is null, we have to $\varepsilon_{i_0}=\sum_{i=1}^{n}\varepsilon_i\alpha_i^2=0$. Remembering that $\rho=0$, follows of the equation (\ref{eq3doKimKimnonossocaso}) that the fiber $F_2$ must have null Ricci curvature. Then, by the result in \cite{Kim-Kim}, if we consider a Einstein manifold $F_2$ of dimension $r$ and with Ricci curvature $\mu=0$, the manifold $(\mathbb{R}^n\times_fF_1^m)\times_hF_2^r$ with metric $\widetilde{g}=(\overline{g}\oplus f^2g_{F_1})\oplus h^2g_{F_2}$ is a Ricci-flat Einstein manifold.
\end{proof}

\vspace{.2in}

\begin{proof}[Proof of Corollary \ref{cor3}:]
In the Theorem \ref{teortudoimplicito} we obtained quasi-Einstein manifolds $(B,g_B,h)$ with $\rho=0$, where $B=\mathbb{R}^n\times_fF_1^m$ and $g_B=\overline{g}\oplus f^2g_{F_1}$, and the functions are implicitly given by (\ref{teortudoimplicitofhvarphi}). Then,
$$Ric_{g_B} - \frac{r}{h}Hess_{g_B}h=\rho g_B=0, \ \ \ \ \ r>0.$$
By the work of Dong-Soo Kim and Young Ho Kim (cf. \cite{Kim-Kim}), the function $h$ satisfies the equation (\ref{eq3doKimKim}) for some constant $\mu\in\mathbb{R}$. Choosing a manifold $F_2$ of dimension $r$ and with Ricci curvature $\mu$, we built a Ricci-flat Einstein manifold $(\mathbb{R}^n\times_fF_1^m)\times_hF_2^r$ with metric $\widetilde{g}=(\overline{g}\oplus f^2g_{F_1})\oplus h^2g_{F_2}$.
\end{proof}

\vspace{.2in}


\begin{thebibliography} {99}

\bibitem{Be} {A. L. Besse} - \textit{Einstein Manifolds}, Spring-Verlag, Berlin, 1987.

\bibitem{romildobenedito} {B. Leandro; R. Pina} - \textit{Invariant solutions for the static vacuum equation}, Journal of Mathematical Physics, {\bf 58}, 2017.

\bibitem{Chow-Knopf} {B. Chow; D. Knopf} - \textit{The Ricci flow: an introduction}, Mathematical Surveys and Monographs, AMS, {\bf 110}, Providence, RI, 2004.

\bibitem{O'neil}
{B. O'Neil} - \textit{Semi--Riemannian Geometry with Applications to Relativity}. Academic Press, New York, 1983.

\bibitem{He-Petersen-Wylie} {C. He, P. Petersen, W. Wylie} - \textit{On the classification of warped product Einstein metrics}. Comm. Anal. Geom. {\bf 20}(2), 271--312 (2012).

\bibitem{Kim-Kim} {D.-S. Kim; Y. H. Kim} - \textit{Compact Einstein warped product spaces with nonpositive scalar curvature}, Proc. Amer. Math. Soc., {\bf 131}(8):2573--2576 (electronic), 2003.

\bibitem{Catino} {G. Catino; C. Mantegazza; L. Mazzieri; M. Rimoldi} - \textit{Locally conformally
flat quasi-Einstein manifolds}, J. Reine Angew. Math., {\bf 675}:181–189, 2013.

\bibitem{Davis} {H. T. Davis} - \textit{Introduction to nonlinear differential and integral equations}, Dover
Publications, New York, 1962.

\bibitem{Case-Shu-Wei} {J. Case; Y.-J. Shu; G. Wei} - \textit{Rigidity of quasi--Einstein metrics}, Differential Geom. Appl., {\bf 29}(1):93--100, 2011.

\bibitem{BV-GR-VL} {M. Br\'ozos-V\'azquez, E. Garc\'ia-R\'io, R. V\'azquez-Lorenzo} - \textit{Some remarks on locally conformally flat static space--times}. Journal of Mathematical Physics, {\bf 46}(022501):11p., 2005.

\bibitem{BV-GR-GF} {M. Br\'ozos-V\'azquez, E. Garc\'ia-R\'io, S. Gavino-Fernández} - \textit{Locally conformally flat Lorentzian quasi-Einstein manifolds}. Monatshefte für Mathematik, {\bf 173}, p. 175--186, 2014.

\bibitem{Marcio-Romildo} {M. L. Sousa; R. S. Pina} - \textit{A family of warped product semi-Riemannian Einstein metrics}. Differentia Geometry and its Applications, {\bf 50}: 105-–115, 2017.
\end{thebibliography}
\end{document}